\documentclass[12pt]{amsart}
\usepackage{amsmath,
amsfonts,
amsthm,
amssymb,
bbold,
mathrsfs}

\newcommand{\dom}{\operatorname{dom}}
\newcommand{\lex}{<_{\textrm{lex}}}
\newcommand{\comment}[1]{}

\newcommand{\rest}{\upharpoonright}
\newcommand{\Ht}{\operatorname{ht}}
\newcommand{\PFA}{\mathrm{PFA}}

\newcommand{\MA}{\mathrm{MA}}
\newcommand{\Seq}[1]{\langle #1 \rangle}

\newcommand{\ZFC}{\mathrm{ZFC}}

\newcommand{\CH}{\mathrm{CH}}

\newcommand{\Qbb}{\mathbb{Q}}

\newcommand{\range}{\mathrm{range}}

\theoremstyle{plain}
\newtheorem{thm}{Theorem}[section]
\newtheorem{lem}[thm]{Lemma}

\newtheorem{fact}[thm]{Fact}

\newtheorem{question}[thm]{Question}

\theoremstyle{definition}
\newtheorem{defn}[thm]{Definition}

\begin{document}

\author[H. Lamei Ramandi]{Hossein Lamei Ramandi}

\address{Department of Mathematics \\ University of Toronto,
Toronto \\ Canada}

\title[Galvin's Question ]{Galvin's Question on non-$\sigma$-well Ordered Linear Orders}

\subjclass[]{}
\keywords{trees, linear orders, $\sigma$-well ordered, $\sigma$-scattered }

\email{{\tt hossein@math.toronto.edu}}

\begin{abstract} 
Assume $\mathcal{C}$ is the class of all linear orders $L$ such that 
$L$ is not a countable union of well ordered sets, 
and every uncountable subset of $L$ contains a copy of $\omega_1$. 
We show it is consistent that $\mathcal{C}$ has minimal elements. 
This answers an old question due to Galvin  in \cite{new_class_otp}.
\end{abstract}
\maketitle
\section{Introduction}
A linear order $L$ is said to be \emph{$\sigma$-well ordered}
if it is a countable union of well ordered subsets.
Galvin asked whether or not every non-$\sigma$-well ordered linear order 
has to contain a real type, Aronszajn type, or $\omega_1^*$.
Baumgartner answered Galvin's question negatively by proving the following theorem.

\begin{thm}[\cite{new_class_otp}]\label{Btypes}
There are non-$\sigma$-well ordered linear orders $L$ such that every uncountable 
suborder of $L$ contains a copy of $\omega_1$.
\end{thm}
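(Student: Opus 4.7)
My plan is to construct $L$ explicitly by combining a tree structure with a lexicographic ordering on its branches. Specifically, I would build an $\omega_1$-tree $T$ of height $\omega_1$ with all levels countable such that each node has uncountably many cofinal branches extending it, and at each node the set of immediate successors is ordered (say as a suborder of $\mathbb{Q}$) so as to permit ``lex-zigzag'' among the cofinal extensions. Then $L$ is taken to be the set of cofinal branches of $T$, linearly ordered lexicographically. A transfinite recursion along the countable levels should suffice to produce such a $T$ in $\ZFC$.

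The first verification is that every uncountable $A\subseteq L$ contains a copy of $\omega_1$. The idea is a pressing-down / $\Delta$-system argument on the ``trunk'' $T_A := \{t\in T : t \text{ is an initial segment of some } b\in A\}$. Since each level of $T_A$ is countable while $|A|=\aleph_1$, I can locate an increasing chain $\langle s_\alpha : \alpha<\omega_1\rangle$ in $T_A$ such that every $s_\alpha$ admits cofinal extensions lying in $A$, with these extensions chosen ``to the right'' at each successor step. Picking one such branch through each $s_\alpha$ yields a strictly lex-increasing $\omega_1$-sequence inside $A$.

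The second and main step is that $L$ is not $\sigma$-well ordered. Suppose toward contradiction that $L=\bigcup_{n<\omega} A_n$ with each $A_n$ well ordered in the lex order. Then each $A_n$ contains no descending $\omega$-sequence, while some $A_n$ must be uncountable. I would aim to bake into $T$ the dual property that every uncountable family of cofinal branches necessarily contains a descending $\omega$-sequence in lex. The mechanism: the density of the horizontal ordering of successors at every node, together with the uncountable fan-out of cofinal branches, forces an uncountable family of branches to zig-zag infinitely often and thereby manufacture a descending $\omega$-chain, contradicting well-orderedness of $A_n$.

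The principal obstacle is coordinating the tree construction to satisfy both properties simultaneously: forcing $\omega_1$ into every uncountable subset pushes toward well-orderedness, while preventing $\sigma$-well ordering demands that descending sequences be spread throughout. The delicate balance is achieved by using $T$ in two complementary ways—ascending $\omega_1$-chains come from vertical paths, while descending $\omega$-chains come from the horizontal order at the splitting levels. A careful inductive construction, plausibly diagonalizing at each stage against candidate $\sigma$-well-ordered partitions while maintaining an uncountable continuation through every node of $T$, is what I expect to be the combinatorial heart of the argument.
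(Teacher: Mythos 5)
First, note that the paper does not prove this statement; it is quoted from Baumgartner's \emph{A new class of order types}, where the witnessing order is (essentially) $\sigma A$ for a stationary $A\subseteq\omega_1$: the set of strictly increasing well-ordered sequences from $A$, ordered by ``$s<t$ iff $t$ is a proper initial segment of $s$ or $s\lex t$'' --- the same order the present paper uses on the set $U$ in the proof of Lemma \ref{scattered_wellordered}. So your proposal is judged on its own merits, and it has a fatal internal contradiction. You propose to establish non-$\sigma$-well-orderedness by arranging that \emph{every} uncountable family of branches contains a lex-descending $\omega$-sequence. But your first property says every uncountable suborder of $L$ contains a copy of $\omega_1$; in particular $L$ contains an uncountable well-ordered subset, which contains no descending $\omega$-sequence at all. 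The two properties you intend to ``bake in'' are mutually exclusive, so the second half of your argument cannot be carried out. Non-$\sigma$-well-orderedness must instead be witnessed globally: one shows that no countable decomposition of $L$ into well-ordered pieces exists, e.g.\ by a pressing-down argument on a stationary set (Baumgartner) or, as in this paper, by showing the invariant $\Gamma$ is stationary and invoking Theorem \ref{Omega} together with Lemma \ref{scattered_wellordered}.

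There is a second, independent gap in your first step. A lexicographically ordered branch space of a tree with ``dense horizontal splitting at every node'' and uncountably many branches through every node will, absent further constraints, contain copies of $\omega_1^*$: along a fixed branch $b$ one can split off to the left at cofinally many levels, producing branches $b_\xi\lex b$ with $\sup_\xi\Delta(b,b_\xi)=\omega_1$, and these form an uncountable suborder with no copy of $\omega_1$ (compare Fact \ref{no_omega_1*}). Your pressing-down argument only manufactures one increasing $\omega_1$-chain somewhere; it does not exclude $\omega_1^*$ (nor real types) from other uncountable suborders, which is what the statement requires. This is exactly why the forcing $Q$ in Section 3 carries the auxiliary function $d_q$ controlling how far to the right branches may split: preventing $\omega_1^*$ is a genuine constraint that must be built into the construction, not a byproduct of dense splitting. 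As it stands, your sketch neither rules out $\omega_1^*$ nor has a viable route to non-$\sigma$-well-orderedness.
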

Recall that a linear order $L$ is said to be a real type, 
if it is isomorphic to an uncountable set of  real numbers.
An uncountable linear order $L$ is said to be an Aronszajn type, if it does not contain 
any real type or copies of $\omega_1, \omega_1^*$.
Here $\omega_1^*$ is $\omega_1$ with the reverse ordering.

Let $\mathcal{C}$ be the class of all non-$\sigma$-well ordered linear orders $L$
such that every uncountable suborder of $L$ contains a copy of $\omega_1$.
Note that the elements in $\mathcal{C}$ together with real types, Aronszajn types, and $\omega_1^*$
form a basis for the class of non-$\sigma$-well ordered linear orders.
Buamgartner's theorem asserts it is essential to include $\mathcal{C}$ in this basis.

In the final section of \cite{new_class_otp}, 
Baumgartner mentions the following question which is due to Galvin.
\begin{question}[\cite{new_class_otp}, Problem 4]\label{question}
$L \in \mathcal{C}$ is said to be minimal provided that whenever 
$L' \subset L$, $|L'| = |L|$ and $L' \in \mathcal{C}$ then $L$ embeds into $L'$.
Does $\mathcal{C}$ have minimal elements?
\end{question}

Before we answer Question \ref{question}, 
we discuss the motivation behind this question.
The following two deep theorems are about the minimality of non-$\sigma$-well ordered order types.  
\begin{thm}[\cite{club_isomorphic}]
Assume $\MA_{\omega_1}$. Then it is consistent that there is a minimal Aronszajn line.
\end{thm}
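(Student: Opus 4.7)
The plan is to exhibit an Aronszajn line $C$ and verify, using $\MA_{\omega_1}$, that every uncountable $L'\subseteq C$ admits an order-embedding of $C$ into $L'$. The natural candidate is a \emph{Countryman line}, i.e.\ an uncountable linear order $C$ whose coordinatewise square $C\times C$ is a countable union of chains. Such lines exist in $\ZFC$ (for instance, by Todor\v{c}evi\'c's construction from a coherent sequence of one-to-one maps $e_\alpha\colon\alpha\to\omega$), and any Countryman line is automatically Aronszajn: a real type, a copy of $\omega_1$, or a copy of $\omega_1^*$ inside $C$ would yield an uncountable antichain in $C\times C$, contradicting the countable-chain decomposition.

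For a fixed Countryman line $C$ and an arbitrary uncountable $L'\subseteq C$, I would then use the natural forcing $\mathbb{P}_{L'}$ whose conditions are finite order-preserving partial maps $p\colon C\to L'$, ordered by reverse inclusion. The sets $\{p:c\in\dom p\}$ are dense for every $c\in C$, and a sufficiently generic filter produces an order-embedding $C\hookrightarrow L'$; injectivity comes for free on a linear order. $\MA_{\omega_1}$ delivers such a filter against these $\aleph_1$ many dense sets as long as $\mathbb{P}_{L'}$ is ccc.

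The main obstacle is the ccc verification for $\mathbb{P}_{L'}$. Given $\{p_\alpha:\alpha<\omega_1\}\subseteq \mathbb{P}_{L'}$, a standard $\Delta$-system argument reduces to an uncountable subfamily whose conditions share a common root and whose ``new'' parts are order-isomorphic. Incompatibility of $p_\alpha$ and $p_\beta$ then produces a \emph{reversed} pair, i.e.\ some $(c,c')\in C\times C$ with $c<c'$ and $p_\alpha(c)>p_\beta(c')$ (or symmetrically). Decomposing $C\times C$ into countably many coordinatewise-monotone chains as furnished by the Countryman hypothesis and pigeon-holing on the indices of these chains, I confine all candidate reversed pairs associated to the conditions to a single chain, where by definition no reversed pair exists. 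Hence cofinally many pairs $\alpha<\beta$ remain compatible, and $\mathbb{P}_{L'}$ is ccc. An application of $\MA_{\omega_1}$ then yields the embedding of $C$ into $L'$, and since $L'$ was an arbitrary uncountable suborder, $C$ is a minimal Aronszajn line.
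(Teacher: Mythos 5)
First, a point of reference: the paper does not prove this statement at all --- it is quoted from Abraham--Shelah \cite{club_isomorphic} purely as motivation, so there is no internal proof to compare yours against. Your strategy (fix a Countryman line $C$, which exists in $\ZFC$, and use $\MA_{\omega_1}$ with the poset of finite order-preserving partial maps to embed $C$ into each uncountable $L'\subseteq C$) is the standard modern route to this result, whereas Abraham--Shelah's original argument builds a model of $\MA_{\omega_1}$ in which suitable Aronszajn trees are club-isomorphic and reads the minimal line off a lexicographically ordered special tree. So the plan is in the right family, but as written it contains one false assertion and two genuine gaps.

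The false assertion: a copy of $\omega_1$ in $C$ does \emph{not} produce an uncountable antichain in $C\times C$. An antichain in $\omega_1\times\omega_1$ codes an order-reversing injection between sets of ordinals and is therefore finite; the correct argument that $\omega_1\times\omega_1$ is not a countable union of chains is different (some chain must meet two horizontal lines $\omega_1\times\{\beta\}$ and $\omega_1\times\{\beta'\}$ in uncountable sets, from which one extracts two incomparable points). The genuine gaps are in the forcing. (i) Density of $\{p : c\in\dom(p)\}$ fails for an arbitrary uncountable $L'$: if $c$ lies between two points of $\dom(p)$ whose images bound an interval of $L'$ that is empty or finite, $p$ cannot be extended. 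One must first pass to an $\aleph_1$-dense suborder of $L'$ and know that $C$ embeds into its own $\aleph_1$-dense suborders, which is itself a fragment of the minimality being proved. (ii) The ccc verification is where the real content lies, and your pigeonholing does not close it. After the $\Delta$-system argument and after fixing, for each $i\le k$, one chain $K_{n_i}$ containing the $i$-th pair $(a^\alpha_i,b^\alpha_i)$ of every condition, comparability inside $K_{n_i}$ controls only the interaction between $(a^\alpha_i,b^\alpha_i)$ and $(a^\beta_i,b^\beta_i)$ for the \emph{same} index $i$. Incompatibility of $p_\alpha$ and $p_\beta$ can equally be witnessed by a cross pair with $i\neq j$, say $a^\alpha_i<a^\beta_j$ while $b^\alpha_i>b^\beta_j$; the point $(a^\alpha_i,b^\beta_j)$ lies in the graph of no condition, so confining the graphs to chains says nothing about it, and a Countryman line is Aronszajn, hence has no countable dense set with which to separate the indices. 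This is exactly where the published proofs invoke extra structure ($\aleph_1$-density and non-stationarity of the line, or direct work inside the tree representation of $C$); without such an ingredient the argument does not go through.
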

\begin{thm}[\cite{reals_isomorphic}]
Assume $\PFA$. Then every two $\aleph_1$-dense\footnote{A linear order is said to be $\aleph_1$-dense if every non-empty interval has size $\aleph_1$.} 
subsets of the reals are isomorphic.
\end{thm}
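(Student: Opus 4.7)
The plan is a standard finite-approximation forcing argument. Let $A, B \subseteq \Rbb$ be $\aleph_1$-dense. Define $P$ to be the poset of finite order-preserving partial functions from $A$ to $B$, ordered by reverse inclusion. For each $a \in A$, the set $D_a = \{p \in P : a \in \dom(p)\}$ is dense in $P$, since $\aleph_1$-density of $B$ supplies a point of $B$ inside the appropriate open interval determined by $p$; symmetrically, $E_b = \{p : b \in \range(p)\}$ is dense for each $b \in B$. There are $\aleph_1$ such dense sets, and a filter meeting them all has union an order-isomorphism $A \to B$.

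To apply $\PFA$ I must verify that $P$ is proper. I would fix a countable elementary submodel $M \prec H_\theta$ with $A, B, P \in M$ and $p \in P \cap M$, and show that $p$ is already $(M, P)$-generic. Given a maximal antichain $\mathcal{A} \in M$ and an extension $q \leq p$, write $q = p \cup \{(x_i, y_i) : i < n\}$. The plan is to use $\aleph_1$-density of $A$ and $B$ together with elementarity to locate $(x_i', y_i') \in (A \cap M) \times (B \cap M)$ realizing the same order pattern over $p$ as the $(x_i, y_i)$ (same relative order among themselves, and same interval of $\Rbb \setminus \dom(p)$ for each $x_i'$, likewise for $y_i'$). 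Then $q' = p \cup \{(x_i', y_i') : i < n\}$ lies in $M$, and by maximality of $\mathcal{A}$ together with elementarity some $r \in \mathcal{A} \cap M$ is compatible with $q'$. One then argues, by a further adjustment of the $(x_i', y_i')$, that the same $r$ is compatible with $q$ itself.

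The main obstacle is this last compatibility transfer. It requires choosing the $(x_i', y_i')$ carefully enough that the 'new' coordinates introduced by $r$ beyond $q'$ fall in open intervals disjoint from the coordinates of $q$. The combinatorial heart is a pigeonhole or $\Delta$-system argument: each interval of $A$ with endpoints in $M$ contains $\aleph_1$-many points of $M$ (and similarly for $B$), so among the countably many possible $r \in \mathcal{A} \cap M$ one can find witnesses $(x_i', y_i')$ realizing the correct order pattern while simultaneously keeping the 'new' part of some $r$ off the coordinates of $q$. This is where $\aleph_1$-density of \emph{both} $A$ and $B$ is indispensable. Once $P$ is shown to be proper, a single application of $\PFA$ to the family $\{D_a : a \in A\} \cup \{E_b : b \in B\}$ produces the desired isomorphism.
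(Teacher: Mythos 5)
This theorem is quoted from Baumgartner's paper and is not proved in the text, so I can only assess your argument on its own terms. The overall shape --- finite order-preserving partial functions, the dense sets $D_a$ and $E_b$, one application of $\PFA$ --- is indeed the skeleton of Baumgartner's argument, and your verification that $D_a$ and $E_b$ are dense is fine. The gap is in the properness verification, and in fact the strategy you describe is provably doomed: you propose to show that \emph{every} $p \in P \cap M$ is already $(M,P)$-generic. Since the empty condition lies in every such $M$, this would show that for every maximal antichain $\mathcal{A} \in M$ the countable set $\mathcal{A} \cap M$ is predense in all of $P$; as $\mathcal{A}$ is an antichain, every $a \in \mathcal{A}$ would then be compatible with, hence equal to, some element of $\mathcal{A} \cap M$, so $\mathcal{A} = \mathcal{A}\cap M$ would be countable. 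Thus your argument, if correct, would prove in $\ZFC$ that $P$ is ccc, and $\MA_{\omega_1}$ would already imply the theorem. But Abraham, Rubin and Shelah showed that $\MA_{\omega_1}$ is consistent with the existence of two non-isomorphic $\aleph_1$-dense sets of reals (via ``increasing''/entangled sets), so the finite-condition poset is not provably ccc and the genericity claim must fail for some $A$, $B$, $M$, $q$.

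The concrete failure is exactly at the step you flag as the main obstacle, the transfer of compatibility from $q'$ to $q$. A point $x_i \in A \setminus M$ determines a cut in $A\cap M$, and $y_i \in B\setminus M$ determines a cut in $B \cap M$; these are two independent \emph{infinitary} pieces of data, and no finite surrogate tuple $(x_i',y_i')$ in $M$ realizing the same finite order pattern over $p$ controls them. A condition $r \in \mathcal{A}\cap M$ compatible with $q'$ may contain a pair $(u,v)$ with $u$ lying between $x_i'$ and $x_i$ while $v$ does not lie correspondingly between $y_i'$ and $y_i$, and then $r \cup q$ fails to be order-preserving. No pigeonhole or $\Delta$-system argument on finite tuples repairs this, because the obstruction lives in the uncountable combinatorics of how cuts of $A$ and cuts of $B$ can be correlated --- precisely the phenomenon that entangled sets exploit. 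Overcoming it is the entire content of Baumgartner's theorem: his proof establishes $(M,P)$-genericity only for carefully constructed conditions and requires a genuinely delicate uncountable combinatorial analysis, not just the interval non-emptiness consequence of $\aleph_1$-density used in your sketch.
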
 
\noindent
In particular, these theorems show  it is consistent that real types and Aronszajn types have minimal elements.
It is trivial that $\omega_1^*$ is a minimal non-$\sigma$-well ordered linear order as well. 
So it is natural to ask whether or not $\mathcal{C}$ can have minimal elements.
A consistent negative answer to Question \ref{question} is provided in \cite{no_real_Aronszajn}.
\begin{thm}\label{Ishiu_Moore}
Assume $\PFA^+$. 
Then every minimal non-$\sigma$-scattered linear order is either a real type or an Aronszajn type.
\end{thm}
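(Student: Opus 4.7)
The plan is to argue by contradiction: suppose $L$ is a minimal non-$\sigma$-scattered linear order that is neither a real type nor an Aronszajn type; the goal is then to produce, under $\PFA^+$, a same-cardinality non-$\sigma$-scattered $L'\subsetneq L$ into which $L$ fails to embed, contradicting minimality.

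First I would reduce to $|L|=\aleph_1$. Every non-$\sigma$-scattered linear order contains an uncountable suborder that is itself non-$\sigma$-scattered, by a downward L\"owenheim--Skolem argument applied to a hypothetical scattered decomposition chosen inside a countable elementary submodel; combined with minimality this forces $|L|=\aleph_1$.

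Next I would invoke Moore's five-element basis theorem, a consequence of $\PFA$: every uncountable linear order contains a copy of one of $X,\omega_1,\omega_1^*,C,C^*$, where $X$ is a fixed $\aleph_1$-dense set of reals and $C$ is a fixed Countryman line. If $X$ embeds into $L$, pick a copy $L''\cong X$ inside $L$; as an uncountable subset of $\Rbb$ it is non-$\sigma$-scattered, so minimality gives $L\hookrightarrow L''$, whence $L$ is itself a real type, contradicting the reductio hypothesis. Similarly, if $C$ or $C^*$ embeds into $L$ then minimality identifies $L$ with a suborder of the relevant Countryman line, forcing $L$ to be an Aronszajn type. So I may assume that none of $X$, $C$, $C^*$ embeds into $L$, and hence by the basis theorem every uncountable suborder of $L$ contains a copy of $\omega_1$ or $\omega_1^*$.

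The remaining case is the heart of the proof, and the step I expect to be the principal obstacle. I would attempt to construct the desired $L'\subsetneq L$ via a proper forcing $P$ whose conditions are countable approximations to $L'$, decorated with finite lists of partial embeddings of $L$ into $L$ committed to be destroyed; meeting $\aleph_1$ natural dense sets then yields $L\not\hookrightarrow L'$ from $\PFA$ alone. The delicate point is to preserve non-$\sigma$-scatteredness of $L'$. One would isolate a canonical stationary set $S\subseteq[L]^{\aleph_0}$ whose members witness that no countable family of scattered suborders covers $L$, and then needs the $P$-name for the trace of $S$ on $L'$ to remain stationary in the extension. This is precisely the strengthening $\PFA^+$ furnishes over plain $\PFA$: simultaneous meeting of $\aleph_1$ dense sets together with preservation of stationarity of a given name inside a proper forcing. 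The resulting $L'$ contradicts minimality of $L$, completing the argument.
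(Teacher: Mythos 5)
The paper does not prove this statement: it is quoted as an external result of Ishiu and Moore \cite{no_real_Aronszajn}, so there is no in-paper proof to compare against. Judged on its own terms, your outline reproduces the correct global architecture of the known argument: reduce to $|L|=\aleph_1$, split into cases according to which basic uncountable order types $L$ contains, and isolate the case where every uncountable suborder of $L$ contains $\omega_1$ or $\omega_1^*$ as the one where $\PFA^+$ (rather than $\PFA$) is genuinely used, via the characterization of $\sigma$-scattered orders through the stationarity of $\Gamma$. Two remarks on the easy half. First, the five-element basis theorem is more than you need: the ZFC fact that every uncountable linear order contains a real type, an Aronszajn type, $\omega_1$, or $\omega_1^*$ already gives the case split, since real types and Aronszajn types are non-$\sigma$-scattered and minimality then identifies $L$ with one of them. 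Second, your justification of the reduction to $|L|=\aleph_1$ via a \emph{countable} elementary submodel cannot be right as stated, since it would only yield a countable suborder; one needs submodels of size $\aleph_1$ chosen so that the stationarity of $\Gamma(L)$ reflects to the suborder.

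The genuine gap is in the step you yourself flag as the heart. The mechanism you propose --- conditions carrying ``finite lists of partial embeddings of $L$ into $L$ committed to be destroyed,'' with $\aleph_1$ dense sets guaranteeing $L\not\hookrightarrow L'$ --- cannot work: there are up to $2^{\aleph_1}$ potential embeddings of $L$ into $L'$, and new ones can appear in the forcing extension, so they cannot be enumerated and killed one at a time by meeting $\aleph_1$ dense sets. Any successful argument must instead arrange a \emph{structural} obstruction, building $L'$ so that some invariant possessed by every isomorphic copy of $L$ provably fails for $L'$ while $\Gamma(L')$ remains stationary (this is exactly the role the $S_L$-style club-shooting plays elsewhere in this paper: one does not destroy embeddings individually, one changes an invariant). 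Since everything before this point is routine, the proposal in effect defers the entire content of the theorem to a step whose stated implementation is flawed.
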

\noindent
Recall that a linear order $L$ is said to be scattered if it does not contain a copy of $(\Qbb, \leq)$.
$L$ is called $\sigma$-scattered if it is a countable union of scattered suborders. 
Theorem \ref{Ishiu_Moore} provides a consistent negative answer to Question \ref{question} because
of the following fact: 
a linear order $L$ is $\sigma$-well ordered if and only if $L$ is $\sigma$-scattered and  $\omega_1^*$
does not embed into $L$.\footnote{This fact probably exists in classical texts. Since we do not have a 
reference for it, we provide a proof in the next section.}
In this paper we provide a consistent positive answer to Question \ref{question} by proving the following theorem.
\begin{thm}\label{main}
Assume $\mathcal{C}$ is the class of all non-$\sigma$-well ordered linear orders $L$ such that 
every uncountable suborder of $L$ contains a copy of $\omega_1$. Then it is consistent 
with $\ZFC$ that $\mathcal{C}$ has a minimal element of size $\aleph_1$.
\end{thm}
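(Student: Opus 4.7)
\medskip

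\noindent\textbf{Proof proposal.}
The plan is to fix a specific ground-model witness $L_0$ to Baumgartner's Theorem \ref{Btypes} of size $\aleph_1$, and then to force with a countable-support iteration of length $\omega_2$ that ``shoots in'' an order-preserving embedding $L_0\hookrightarrow L'$ for every candidate $L'\subseteq L_0$ of size $\aleph_1$ belonging to $\mathcal{C}$. In the final model $L_0$ will be the desired minimal element.

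First I would assume $\diamondsuit_{\omega_1}$ in the ground model and use it to construct $L_0$ of size $\aleph_1$ very concretely, as the lexicographic order on a carefully chosen set $\mathcal{B}$ of $\aleph_1$-many cofinal branches through an $\omega_1$-tree $T\subseteq 2^{<\omega_1}$. Using the diamond guesses I would arrange, by a standard bookkeeping recursion, that (i) every uncountable $\mathcal{B}'\subseteq \mathcal{B}$ contains a $\lex$-increasing $\omega_1$-sequence, and (ii) $\mathcal{B}$ is not a countable union of well-ordered subsets. Property (i) places $L_0\in\mathcal{C}$; the coherent tree skeleton $T$ that $L_0$ carries will be the combinatorial engine driving the minimality argument.

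Next I would define, for each $L'\subseteq L_0$ with $|L'|=\aleph_1$ and $L'\in\mathcal{C}$, a forcing $\mathbb{Q}_{L'}$ whose conditions are countable order-preserving partial functions $p\colon L_0\to L'$ equipped with coherence promises coming from the tree $T$ (roughly: $p$ commits to mapping prescribed $\omega_1$-suborders of $L_0$, coded by branches of $T$, into prescribed $\omega_1$-suborders of $L'$, which exist by the defining property of $\mathcal{C}$). The union of a generic filter will yield the required embedding $L_0\hookrightarrow L'$. I would then iterate $\langle \mathbb{P}_\alpha, \dot{\mathbb{Q}}_\alpha : \alpha<\omega_2\rangle$ with countable support, using a $\diamondsuit_{\omega_2}$-style bookkeeping to handle every $\dot L'\in\mathcal{C}$ appearing in every intermediate extension.

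The main obstacle, and the heart of the argument, is showing that each iterand $\mathbb{Q}_{L'}$ is proper and that the whole iteration preserves both $\omega_1$ and the statement $L_0\in\mathcal{C}$. Properness requires, given a countable elementary $M\prec H_\theta$ and $p\in \mathbb{Q}_{L'}\cap M$, extending $p$ to an $(M,\mathbb{Q}_{L'})$-generic condition; the key combinatorial input is that $L'\in \mathcal{C}$ guarantees $\omega_1$-copies dense enough inside $L'$ to match the $\omega_1$-suborders of $L_0$ coded by the tree $T$, so that one can amalgamate the countably many dense sets of $\mathbb{Q}_{L'}$ coming from $M$. Preservation of ``$L_0\in\mathcal{C}$'' then follows because non-$\sigma$-well-orderedness is a $\Pi^1_1$ statement about the uncountable structure $L_0$ preserved under any $\omega_1$-preserving extension, while ``every uncountable suborder contains $\omega_1$'' is preserved by proper forcing via the coherent branches of $T$, which remain cofinal $\omega_1$-sequences in the extension. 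A standard $\omega_2$-cc plus properness argument on the length $\omega_2$ iteration under $2^{\aleph_1}=\aleph_2$ will then ensure that every $L'\in\mathcal{C}$ with $L'\subseteq L_0$ appearing in the final model has already been caught by the bookkeeping, delivering the desired minimality of $L_0$.
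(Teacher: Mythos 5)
Your outline shares the paper's general architecture (a linear order realized as the lexicographically ordered branches of an $\omega_1$-tree, a countable support iteration of length $\omega_2$ with bookkeeping), but it is missing the two ideas that actually make the argument work. First, you propose to handle \emph{every} $L'\subseteq L_0$ with $L'\in\mathcal{C}$ by forcing an embedding $L_0\hookrightarrow L'$. There is no reason such a forcing should be well-behaved when $L'$ is a \emph{nowhere dense} set of branches of $T$: your conditions must map the whole tree skeleton of $L_0$ into a set of branches that avoids a dense set of cones, and the genericity/amalgamation step will fail. The paper's key structural move is a dichotomy: for somewhere dense $L$ one forces an embedding (the poset $E_L$, which really embeds the tree $T$ into a cone, level by level, consistently with a branch map $\phi$); for nowhere dense $L$ one does \emph{not} embed anything but instead forces $L$ to become $\sigma$-well ordered (the poset $S_L$ shoots a club through $\Omega(L)$), thereby ejecting $L$ from $\mathcal{C}$ so that minimality makes no demand on it. Without this dichotomy the minimality argument does not close.

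Second, your preservation claims are not correct as stated. ``$L_0$ is not $\sigma$-well ordered'' is emphatically \emph{not} preserved by arbitrary $\omega_1$-preserving (or even proper) forcing --- indeed the iteration itself contains the posets $S_L$ whose entire purpose is to make ground-model suborders $\sigma$-well ordered, and nothing a priori stops the iteration from doing the same to $L_0$. This is the hardest point of the paper: Lemma \ref{punch} constructs, by a careful diagonalization against all names for nowhere dense sets and all finite compositions of the generic embeddings, a master condition forcing $M[G]\in\Gamma(T)$, so that $\Gamma(B)$ stays stationary and $B$ remains non-$\sigma$-scattered (hence, by Lemma \ref{scattered_wellordered}, non-$\sigma$-well ordered since $B$ omits $\omega_1^*$). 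Your appeal to a ``$\Pi^1_1$ statement preserved under $\omega_1$-preserving extensions'' does not substitute for this. Relatedly, the iterands here are not proper (the paper notes this explicitly); the correct preservation framework is $\Omega(T)$-completeness, which is what keeps $\omega_1$, keeps $T$ free of Aronszajn subtrees, and adds no new branches --- the last point being what guarantees that every uncountable suborder of $B$ in the final model still contains a copy of $\omega_1$. As written, your proposal would need all of this supplied before it could be called a proof.
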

\noindent
This theorem should be compared to the following theorem from \cite{third}.
\begin{thm}\label{third}
It is consistent with $\ZFC$ that there is a minimal non-$\sigma$-scattered linear order $L$,
which does not contain any real type or Aronszajn type.
\end{thm}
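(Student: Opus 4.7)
The plan is to work in a ground model of $\CH$, fix a canonical example $L_0 \in \mathcal{C}$ of size $\aleph_1$ (built as in Baumgartner's Theorem~\ref{Btypes}), and then force $L_0$ to be minimal via a countable support iteration of proper forcings of length $\omega_2$. For the ground-model object I would take $L_0$ to be built from a coherent Aronszajn-type tree $T$ of height $\omega_1$---for instance, as a suborder of $T$ under a lexicographic ordering on nodes or branches. Baumgartner's construction supplies the membership $L_0 \in \mathcal{C}$, and I would additionally arrange that $T$ satisfies a strong coherence hypothesis (such as being a Lipschitz tree) so that partial embeddings of $L_0$ into suborders of $L_0$ admit a uniform combinatorial description.

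Next, set up a countable support iteration $\langle \mathbb{P}_\alpha, \dot{\mathbb{Q}}_\alpha : \alpha < \omega_2 \rangle$ and, using the standard bookkeeping available for CS iterations of size-$\aleph_1$ proper forcings, enumerate all names $\dot{L}'_\alpha$ for uncountable suborders of $L_0$ forced to lie in $\mathcal{C}$. At stage $\alpha$, take $\dot{\mathbb{Q}}_\alpha$ to be the natural forcing whose conditions are countable order-preserving partial maps $L_0 \to \dot{L}'_\alpha$ ordered by extension, augmented with finite sets of countable elementary submodels as side conditions to enforce properness. The generic filter gives an embedding $L_0 \hookrightarrow L'_\alpha$, and by a reflection argument every uncountable $L' \in \mathcal{C}$ contained in $L_0$ in the final extension is caught at some stage, which yields the minimality of $L_0$.

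The principal obstacles are threefold. First, proving $\dot{\mathbb{Q}}_\alpha$ is proper: given a countable elementary submodel $M$ containing everything relevant, one uses the hypothesis $L'_\alpha \in \mathcal{C}$ to locate a copy of $\omega_1$ inside $L'_\alpha$ along which a partial embedding can be extended past the critical height $M \cap \omega_1$, and the coherence of $T$ is essential for making this extension respect order. Second, a preservation theorem: one must show the iteration neither adds an uncountable suborder of $L_0$ omitting a copy of $\omega_1$ nor makes $L_0$ $\sigma$-well ordered---the usual strategy is to prove each $\dot{\mathbb{Q}}_\alpha$ satisfies a preservation property of the form ``$L_0$ remains in $\mathcal{C}$,'' and then invoke a preservation theorem for countable support iterations. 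Third, the bookkeeping must be arranged so that every uncountable $L' \subseteq L_0$ in the final model has a name appearing at some intermediate stage, via the standard $\aleph_2$-reflection argument. I expect the interplay between designing $\dot{\mathbb{Q}}_\alpha$ so that it is simultaneously proper and preserves membership in $\mathcal{C}$ to be the main technical difficulty.
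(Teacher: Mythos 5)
There are two fatal gaps here. First, your ground-model object cannot work. This theorem is quoted from \cite{third}, where the witness is a dense set of cofinal branches of a Kurepa tree under the lexicographic order (and the closely parallel witness in the present paper is the branch space $B$ of a generically added $\omega_1$-tree with exactly $\aleph_1$ cofinal branches and no Aronszajn subtree). If instead you build $L_0$ from an Aronszajn tree $T$, you get nothing usable: an Aronszajn tree has no cofinal branches, and lexicographically ordering its nodes is precisely the standard construction of an Aronszajn line, which contains no copy of $\omega_1$ whatsoever. Such an $L_0$ is itself an Aronszajn type, so it is not in $\mathcal{C}$ and directly violates the conclusion of the theorem. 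What makes every uncountable suborder contain a copy of $\omega_1$ while the whole order stays non-$\sigma$-scattered is exactly the opposite hypothesis: a tree with uncountably many cofinal branches and \emph{no} Aronszajn subtree, linearized on its branch space.

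Second, properness is the wrong framework, and the paper says so explicitly: the iterands $E_L$ and $S_L$ used here ``are not even proper''; they are only $\Omega(T)$-complete. This is not an inconvenience you can engineer away with elementary-submodel side conditions. For minimality you must handle suborders $L'$ that are non-$\sigma$-scattered (equivalently in $\mathcal{C}$) but nowhere dense in the branch space; these need not admit any copy of $L_0$, so the only option is to eject them from $\mathcal{C}$ by making them $\sigma$-well ordered. By Theorem \ref{Omega} that means destroying the stationarity of $\Gamma(L')$ in $[\hat{L'}]^\omega$, which no proper forcing can do --- and Theorem \ref{Ishiu_Moore} (under $\PFA^+$ every minimal non-$\sigma$-scattered order is real or Aronszajn) is a second warning that a properness-based iteration cannot produce this object. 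Your proposal omits this nowhere-dense case entirely, forcing embeddings only into suborders assumed to lie in $\mathcal{C}$. Finally, the genuinely hard step --- that $L_0$ itself remains non-$\sigma$-scattered, i.e.\ that $\Gamma(L_0)$ stays stationary through the whole iteration --- cannot be discharged by ``invoking a preservation theorem'': no off-the-shelf theorem preserves this, and the actual argument (Lemma \ref{punch} here, and the Kurepa-subtree argument in \cite{third}) is a delicate construction of master conditions that exploits the precise definitions of $Q$, $E_L$, and $S_L$.
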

\noindent
Theorem \ref{third} does not answer Question \ref{question}.
The reason is that the linear orders which witness Theorem \ref{third} in \cite{third}, 
are dense suborders of the set of all branches of a Kurepa tree $K$. 
Note that such linear orders have to contain copies of $\omega_1^*$.
Moreover, the only way to show that a suborder $L$ of the set of branches of $K$
is not $\sigma$-scattered was to show that $L$ is dense in a Kurepa subtree. 
In particular, it was unclear how to keep the tree $K$ non-$\sigma$-scattered, 
if $K$ had only $\aleph_1$ many branches.
In this paper, aside from eliminating copies of $\omega_1^*$,
we provide a different way of   keeping  $\omega_1$-trees like $K$ 
non-$\sigma$-scattered, in certain forcing extensions.

\section{Preliminaries}
In this section we review some facts and terminology regarding $\omega_1$-trees, linear orders and 
countable support iteration of some type of forcings. 
The material in this section can also be found in \cite{no_real_Aronszajn} and \cite{second}.

Recall that an $\omega_1$-tree is a tree which has height $\omega_1$ and countable levels.
If $T$ is a tree we assume that it does not branch at limit heights.   
More precisely, if $s,t$ are distinct elements in the same level of limit height then they have different
sets of predecessors.
Moreover, we only consider $\omega_1$-trees $T$ that  are ever branching: 
for every element $t \in T$ and $\alpha \in \omega_1$
there are $u,v$ of height more than $\alpha$ which are above $t$ and which  are incomparable. 

Assume $T$ is a tree and $U \subset T$. 
We say that $U$ is \emph{nowhere dense} if for all $t \in T$
there is $s > t$ such that $U$ has no element above $s$. 
If $T$ is a tree and $A$ is a set of ordinals then $T \rest A$ 
is the tree consisting of all $ t \in T$ with $\Ht (t) \in A$.
Assume $T, U$ are trees. The function $f: T \longrightarrow U$ is said to be a \emph{tree embedding} 
if $f$ is one-to-one,
 it is level preserving and $t < s$ if and only if $f(t) < f (s)$. 
Assume $T$ is a tree, then $T_t$ is the collection of all $s \in T$ which are comparable with $t$.  
We call a chain $b \subset T$ a cofinal branch, if it intersects all levels of $T$.
If $b \subset T$ is a branch then $b(\alpha)$ refers to the element $t \in b$ which is of height $\alpha$.  
If $b , b'$ are two different maximal chains then $\Delta(b,b')$ is the smallest ordinal $\alpha$
such that $b(\alpha) \neq b'(\alpha)$. 
The collection of all cofinal branches of $T$ is denoted by $\mathcal{B}(T)$. 
We use the following fact which is easy to check.
\begin{fact}\label{no_omega_1*}
Assume $T$ is a lexicographically ordered $\omega_1$-tree such that 
$(T , \lex )$ has a copy of $\omega_1^*$. 
Then there is a branch $b$ and a sequence of branches $\Seq{b_\xi : \xi \in \omega_1}$
such that: 
\begin{itemize} 
\item for all $\xi \in \omega_1$, $b \lex b_\xi$
\item $\sup \{ \Delta(b, b_\xi) : \xi \in \omega_1 \} = \omega_1$.
\end{itemize}
 \end{fact}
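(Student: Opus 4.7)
The plan is to lift the $\omega_1^*$-embedding of nodes to a lex-decreasing $\omega_1$-sequence of cofinal branches of $T$, extract a ``limit'' branch $b$ that lies lex-below all of them, and verify that the branches agree with $b$ on arbitrarily long initial segments.

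Let $\Seq{t_\xi : \xi \in \omega_1}$ witness the embedding $\omega_1^* \hookrightarrow (T, \lex)$, so $t_\eta \lex t_\xi$ whenever $\xi < \eta$. First pass to an uncountable subsequence on which the heights $\Ht(t_\xi)$ are strictly increasing; this is possible since the heights cannot be bounded (each level is countable). Any $\omega_1$-chain in the tree order is well-ordered by height and hence lex-well-ordered, which would contradict the $\omega_1^*$ direction, so no cofinal branch of $T$ contains more than countably many of the $t_\xi$. Thinning further, we may assume the remaining $t_\xi$'s are pairwise tree-incomparable. For each such $\xi$, extend $t_\xi$ to a cofinal branch $c_\xi \in \mathcal{B}(T)$. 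Tree-incomparability forces the divergence of $c_\xi$ and $c_\eta$ to coincide with that of $t_\xi$ and $t_\eta$, so $c_\eta \lex c_\xi$ whenever $\xi < \eta$, and the $c_\xi$'s are pairwise distinct.

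The combinatorial heart is this: for each $\alpha < \omega_1$, the map $\xi \mapsto c_\xi(\alpha)$ is weakly decreasing with respect to the level-$\alpha$ lex order on $T_\alpha$, because $c_\eta \lex c_\xi$ propagates level by level (values agree below the divergence, and extensions of a lex-smaller node remain lex-smaller in every level above). Since $T_\alpha$ is countable, a weakly decreasing $\omega_1$-sequence in it must stabilize on a tail: there exist $\xi^*_\alpha < \omega_1$ and $v_\alpha \in T_\alpha$ such that $c_\xi(\alpha) = v_\alpha$ for all $\xi \geq \xi^*_\alpha$. For $\alpha < \beta$, any $\xi$ past both $\xi^*_\alpha$ and $\xi^*_\beta$ witnesses $v_\alpha = v_\beta \rest \alpha$, so the $v_\alpha$'s cohere into a cofinal branch $b := \Seq{v_\alpha : \alpha < \omega_1} \in \mathcal{B}(T)$.

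To finish, $b \neq c_\xi$ for every $\xi$ (otherwise the sequence would be eventually constant at $b$, contradicting distinctness), and since $b(\alpha) \leq c_\xi(\alpha)$ in the level lex at every $\alpha$ with strict inequality at the first divergence, $b \lex c_\xi$. For each $\beta < \omega_1$, the ordinal $\zeta_\beta := \sup_{\alpha < \beta} \xi^*_\alpha$ is countable, and every $c_\xi$ with $\xi \geq \zeta_\beta$ satisfies $c_\xi \rest \beta = b \rest \beta$, so $\Delta(b, c_\xi) \geq \beta$. Setting $b_\beta := c_{\zeta_\beta}$ yields an $\omega_1$-sequence of branches with $b \lex b_\beta$ for all $\beta$ and $\sup_\beta \Delta(b, b_\beta) = \omega_1$, as required. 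The main obstacle is the lifting step: arranging that the $c_\xi$'s are pairwise distinct and form a lex-decreasing branch sequence, for which the height-increasing and tree-incomparable extraction is essential; once these monotonicity features are in place, the stabilization of a weakly decreasing $\omega_1$-sequence in a countable linear order is elementary and carries the rest of the argument.
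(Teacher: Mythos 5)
The paper gives no proof of this fact (it is dismissed as ``easy to check''), so there is no argument of the author's to compare against; judged on its own, your proposal is essentially correct, and its core --- the level-by-level stabilization of a $\lex$-weakly-decreasing $\omega_1$-sequence of branches in the countable linear orders $(T_\alpha,\lex)$, the coherence of the stabilized values $v_\alpha$ into a limit branch $b$, and the extraction of the $b_\beta$ via the countable suprema $\zeta_\beta$ --- is sound and complete.

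Two justifications should be repaired. First, the inference ``no cofinal branch contains more than countably many of the $t_\xi$, hence thinning further we may assume the $t_\xi$ are pairwise tree-incomparable'' is invalid as stated: an uncountable subset of an $\omega_1$-tree meeting every chain in a countable set need not contain an uncountable antichain (a Suslin tree is a counterexample). You do not need this inference. Under the convention you are already invoking when you call a tree-chain ``lex-well-ordered'' (namely that $s<_T t$ implies $s\lex t$), once the heights $\Ht(t_\xi)$ are strictly increasing the $t_\xi$ are automatically pairwise incomparable: if $t_\xi<_T t_\eta$ with $\xi<\eta$, then $t_\xi\lex t_\eta$, contradicting $t_\eta\lex t_\xi$. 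So the ``thinning'' clause should be replaced by this direct observation (alternatively, note that a chain meets a set that is simultaneously $\lex$-well-ordered and $\lex$-reverse-well-ordered in a finite set, and decompose by the number of predecessors). Second, extending each $t_\xi$ to a cofinal branch $c_\xi$ silently uses the paper's standing hypothesis that every node of $T$ lies on a cofinal branch; for an arbitrary lexicographically ordered $\omega_1$-tree the statement can fail (if, say, $b$ were the only cofinal branch, there would be no candidates for the $b_\xi$ at all), so this hypothesis should be made explicit at the point where you pass from nodes to elements of $\mathcal{B}(T)$.
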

 \begin{defn}\cite{no_real_Aronszajn}
Assume $L$ is a linear order. We use $\hat{L}$ in order to refer to the \emph{completion} of $L$. 
In other words, we add all the Dedekind cuts to $L$ in order  to obtain $\hat{L}$.
For any set $Z$
and $x \in L$ we say $Z$ \emph{captures} 
$x$ if there is $z \in Z \cap \hat{L}$ such that $Z \cap L$ has no element which is strictly in 
between $z$ and $x$. 
\end{defn}
\begin{fact}\cite{no_real_Aronszajn}
Assume $L$ is a linear order, $M \prec H_\theta$ where $\theta$ is a regular large enough cardinal,
$x \in L$ and $M$ captures $x$. Then there is a unique $z \in \hat{L} \cap M$
such that $M\cap L$ has no element strictly in between $z, x$.
In this case we say that $M$ \emph{captures $x$ via $z$}. 
\end{fact}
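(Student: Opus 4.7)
The plan is to prove uniqueness by contradiction, combining elementarity of $M$ with a basic density property of $\hat{L}$: for any two distinct $z_1 < z_2$ in $\hat{L}$, either some $y \in L$ lies strictly between them, or both $z_1, z_2 \in L$ and are adjacent there. So suppose $z_1 \neq z_2$ are both in $\hat{L} \cap M$ and both witness capture of $x$; without loss of generality $z_1 < z_2$. The substantive content of the Fact is for $x \notin M$ (when $x \in M$, the canonical witness is $z = x$), so I would assume $x \notin M$ throughout and split on the relative position of $x$ with respect to the pair $z_1,z_2$.

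When $z_1 < x < z_2$, apply elementarity of $M$ with parameters $z_1, z_2, L \in M$ to the formula $\exists y\,(y \in L \wedge z_1 < y < z_2)$, which is true in $V$ with witness $x$, to produce some $y \in L \cap M$ with $z_1 < y < z_2$. Because $x \notin M$, $y \neq x$, so $y$ lies strictly on one side of $x$ and contradicts capture via whichever of $z_1, z_2$ sits on the other side. For the two symmetric cases in which $z_1, z_2$ lie on the same side of $x$, consider $z_1 < z_2 \leq x$; here $z_2 \in M$ and $x \notin M$ force $z_2 < x$, and the same elementarity application either produces $y \in L \cap M$ with $z_1 < y < z_2 < x$ (contradicting capture via $z_1$), or yields the conclusion that no $L$-element lies strictly between $z_1$ and $z_2$ at all.

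In this remaining sub-case I would unpack the construction of $\hat{L}$: any $c \in \hat{L} \setminus L$ is the supremum of an $L$-set without a maximum and the infimum of an $L$-set without a minimum, from which a short direct argument shows that such a $c$ cannot be immediately adjacent in $\hat{L}$ to any other element of $\hat{L}$. Hence $z_1, z_2 \in L$, so $z_2 \in L \cap M$ is itself strictly between $z_1$ and $x$, again contradicting capture via $z_1$. The main obstacle is precisely this adjacency sub-case, where elementarity alone does not interpolate and one must fall back on the explicit structure of the Dedekind--MacNeille completion to rule out an adjacent pair of distinct witnesses not both lying in $L$.
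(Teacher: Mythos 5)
The paper itself offers no proof of this Fact --- it is quoted from \cite{no_real_Aronszajn} --- so there is nothing to compare your route against, and I am judging the argument on its own terms. The part you actually carry out, the case $x \notin M$, is correct and complete. The interpolation step is right: since $z_1, z_2, L \in M$ and the interval $(z_1,z_2)$ meets $L$, elementarity produces $y \in M \cap L$ with $z_1 < y < z_2$, and $y \neq x$ because $x \notin M$. Your fallback through the structure of the completion is also the right move for the same-side sub-case: a pair of adjacent elements of $\hat{L}$ must both lie in $L$ (an added cut has $L$-elements converging to it from both sides), so in the residual sub-case $z_2$ itself lands in $M \cap L$ strictly between $z_1$ and $x$, contradicting capture via $z_1$.

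The genuine gap is the case you set aside. When $x \in M$, uniqueness as literally stated can fail: take $L = \omega_1$ and $x \in M$; then the successor $x+1$ is in $M$ by elementarity, no element of $M \cap L$ lies strictly between $x$ and $x+1$, and so both $x$ and $x+1$ witness capture. Thus ``the canonical witness is $z = x$'' is a convention, not a proof of uniqueness, and the excluded case is precisely where the literal statement breaks. The Fact is only true once one fixes a reading that resolves this --- e.g.\ taking $z$ to be the element of $M \cap \hat{L}$ closest to $x$, or counting $z$ itself as lying between $z$ and $x$ when $z \neq x$ --- and that is the reading the paper relies on later (the proof of Lemma \ref{scattered_wellordered} needs $g_x(\xi) = x$ once $x \in M_\xi$). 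Your write-up should either state this repair explicitly and then prove uniqueness for $x \in M$ under it (which is easy: any $z \neq x$ in $M \cap \hat{L}$ then fails to witness, by the same interpolation/adjacency dichotomy you already use), or at minimum flag that the $x \in M$ case is not vacuous. As it stands, silently assuming $x \notin M$ leaves the asserted uniqueness unestablished --- indeed false --- in the omitted case.
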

\begin{defn}\cite{no_real_Aronszajn}
The invariant
$\Omega(L)$ is defined to be the set of all countable $Z \subset \hat{L}$ such that $Z$ captures all elements of $L$. 
We let  $\Gamma(L) = [\hat{L}]^\omega \smallsetminus \Omega(L)$.  
\end{defn}
Assume $T$ is a lexicographically ordered 
$\omega_1$-tree such that for every $t \in T$, there is a cofinal branch $b \subset T$
with $t \in b$.
By $\Omega(T), \Gamma(T)$ we mean $\Omega (\mathcal{B}(T)) , \Gamma (\mathcal{B}(T))$,
where $\mathcal{B}(T)$ is considered with the lexicographic order.
If $M \prec H_\theta$ is countable and $\theta$ is a regular cardinal, we abuse the notation and write
$M \in \Omega(T)$ instead of  $M$ captures all elements of $\mathcal{B}(T)$. 
Similar abuse of notation will be used for $\Gamma$.
The proof of the  following fact is a definition chasing.
\begin{fact}
Assume $T$ is a lexicographically ordered ever branching
$\omega_1$-tree such that for every $t \in T$, there is a cofinal branch $b \subset T$
with $t \in b$. 
Let $\theta $ be a regular cardinal such that $\mathcal{P}(T) \in H_\theta$, 
$M \prec H_\theta$ be countable such that $T \in M$ and $b \in \mathcal{B}(T)$. 
Then $M$ captures $b$ iff there is $c \in \mathcal{B} (T) \cap M$
such that $\Delta(b,c)\geq M \cap \omega_1$.
\end{fact}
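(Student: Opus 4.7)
The plan is to chase definitions using the structural observation that $\delta := M \cap \omega_1$ is an ordinal (because $\omega_1 \in M$): for each $\alpha < \delta$ the countable set $T_\alpha \in M$ is contained in $M$, so $b(\alpha) \in M$; and any two distinct $c, c' \in M \cap \mathcal{B}(T)$ have $\Delta(c, c') \in M \cap \omega_1 = \delta$, hence $\Delta(c, c') < \delta$.

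The $(\Leftarrow)$ direction is direct. Given $c \in M \cap \mathcal{B}(T)$ with $\Delta(b, c) \geq \delta$, take $z := c$. Any $c' \in M \cap \mathcal{B}(T)$ distinct from $c$ has $\Delta(c, c') < \delta \leq \Delta(c, b)$, so $b$ agrees with $c$ at level $\Delta(c, c')$; the lex-comparison of $c'$ with $c$ is decided at the same level and same node as its comparison with $b$, giving the same answer, so $c'$ is not strictly between $c$ and $b$.

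For $(\Rightarrow)$, let $z \in M \cap \widehat{\mathcal{B}(T)}$ witness capture. If $z = b$ then $b \in M$ and $c := b$ works, so assume $b \notin M$ and $z \lex b$ by symmetry. Abbreviate $J_t := \{c' \in \mathcal{B}(T) : t \in c'\}$. For each $\alpha$, either $z \in \widehat{J_{b(\alpha)}}$ or $z \lex \inf J_{b(\alpha)}$ (the alternative $\sup J_{b(\alpha)} \lex z$ is ruled out by $b \in J_{b(\alpha)}$ and $z \lex b$). If a minimal $\alpha_1 < \delta$ witnesses the second option, then $\alpha_1$ must be a successor $\gamma_1 + 1$ (at a limit, no-branching-at-limits forces $\sup_{\gamma < \alpha_1} \inf J_{b(\gamma)} = \inf J_{b(\alpha_1)}$, so the first option at every $\gamma < \alpha_1$ propagates to $\alpha_1$); moreover $b(\gamma_1)$ has a successor $s_0 \in T_{\alpha_1} \subseteq M$ with $s_0 \lex b(\alpha_1)$ (otherwise $\inf J_{b(\alpha_1)} = \inf J_{b(\gamma_1)}$, contradicting $z \in \widehat{J_{b(\gamma_1)}}$), and by elementarity there is a branch $c' \in M \cap J_{s_0}$ with $z \lex c'$, giving $z \lex c' \lex b$ and contradicting capture.

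So $z \in \widehat{J_{b(\alpha)}}$ for every $\alpha < \delta$. If $z \in \mathcal{B}(T)$, then $z$ passes through $b(\alpha)$ for every $\alpha < \delta$, so $\Delta(z, b) \geq \delta$ and $c := z$ works. Otherwise $z$ is a proper cut; I would use the tree approximation $\alpha \mapsto t_\alpha(z) \in T_\alpha$ (definable from $z$, hence in $M$) which satisfies $t_\alpha(z) = b(\alpha)$ for all $\alpha < \delta$, and extend the chain within $M$ to a cofinal branch $c$ using the hypothesis that every node of $T$ admits a cofinal branch, yielding $c \rest \delta = b \rest \delta$ and hence $\Delta(c,b)\ge\delta$. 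I expect the main obstacle to be making this last extension precise (in particular handling levels where $t_\alpha(z)$ is undefined when $z$ sits on the boundary of two adjacent sub-intervals), together with the complementary ``gap'' subtlety in the previous paragraph — when $z$ lies between $\sup J_{s_0}$ and $\inf J_{b(\alpha_1)}$ so no branch in $J_{s_0}$ exceeds $z$ — which is resolved by a leftmost-path argument inside $M$: either $b$ coincides with the definable leftmost cofinal extension of $b(\alpha_1)$ up to level $\delta$ (giving the desired $c$ directly) or $b$ deviates from this leftmost path at some level $\beta < \delta$, where the same successor argument applied at $\beta$ supplies a branch in $M$ between $z$ and $b$.
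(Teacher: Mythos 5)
The paper gives no proof of this Fact (it is dismissed as ``a definition chasing''), so there is no official argument to compare against; I can only assess yours. Your $(\Leftarrow)$ direction is complete and correct. In $(\Rightarrow)$ the skeleton (splitting on whether $z$ stays inside the intervals $\widehat{J_{b(\alpha)}}$) is the right one, but the two difficulties you flag at the end are genuine gaps, and the patches you sketch do not close them. In the case where some $\alpha_1<\delta$ has $z\lex\inf J_{b(\alpha_1)}$, your branch $c'\in M\cap J_{s_0}$ need not satisfy $z\lex c'$ when $\sup J_{s_0}\le z$, and the ``leftmost cofinal extension'' you fall back on need not exist ($J_t$ need not have a $\lex$-minimum). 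The whole detour through the minimal $\alpha_1$ and the successor $s_0$ is unnecessary: $z$ and $\inf J_{b(\alpha_1)}$ are distinct points of $\hat{B}$ (where $B=\mathcal{B}(T)$), so some $x\in B$ satisfies $z\lex x\le\inf J_{b(\alpha_1)}$; since the parameters $z$, $b(\alpha_1)$, $B$ all lie in $M$, elementarity yields such an $x$ in $M$; and then $x\lex b$ because $x\le\inf J_{b(\alpha_1)}\le b$ while $x\ne b$ (as $x\in M$ but $b\notin M$ --- the case $b\in M$ being trivial). That $x$ contradicts capture directly.

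In the remaining case, your map $\alpha\mapsto t_\alpha(z)$ is indeed not well defined at boundary levels, and ``extending the chain to a cofinal branch using the hypothesis that every node lies on a branch'' does not produce what you need: an arbitrary chain need not extend to a cofinal branch, and an extension chosen in $M$ beyond the point where the chain is canonically determined carries no guarantee of agreeing with $b$ below $\delta$. Both problems disappear if you make the defining condition asymmetric: since $z\lex b$, put $c=\{t\in T:\inf J_t\le z\lex\sup J_t\}$. If $t_1\lex t_2$ lie on the same level then $\sup J_{t_1}\le\inf J_{t_2}$, so at most one node per level meets the strict right-hand inequality; $c$ is downward closed, hence a chain; $c$ is definable from $z,T\in M$, hence $c\in M$; and $b(\alpha)\in c$ for every $\alpha<\delta$ because $\inf J_{b(\alpha)}\le z$ by the case hypothesis and $z\lex b\le\sup J_{b(\alpha)}$. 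As $c\in M$ meets every level indexed by an ordinal of $M\cap\omega_1$, elementarity gives that $c$ meets every countable level, so $c$ is a cofinal branch lying in $M$ with $\Delta(b,c)\ge\delta$. (This also repairs the subcase $z\in\mathcal{B}(T)$, where your claim that $z$ passes through every $b(\alpha)$ can fail when $z$ equals $\inf J_{b(\alpha)}$ without belonging to $J_{b(\alpha)}$.) With these two repairs your argument is correct.
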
 
We will use the following lemma in order to characterize $\sigma$-scattered linear orders.
\begin{thm}\cite{no_real_Aronszajn}\label{Omega}
$L$ is $\sigma$-scattered iff $\Gamma(L)$ is not stationary in $[\hat{L}]^\omega$.
\end{thm}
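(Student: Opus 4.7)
The plan is to establish each direction through elementary submodels, using the fact that scatteredness of a linear order $K$ is equivalent to the statement that every countable $M \prec H_\theta$ with $K \in M$ has $M \cap \hat{K}$ capturing every $x \in K$.

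For the forward direction, suppose $L = \bigcup_{n < \omega} L_n$ with each $L_n$ scattered. Fix a regular $\theta$ with $L, \langle L_n \rangle \in H_\theta$ and set
\[
\mathcal{D} \;=\; \{ M \cap \hat{L} : M \prec H_\theta \text{ is countable and } L, \langle L_n \rangle \in M \}.
\]
This is a club in $[\hat{L}]^\omega$, so it suffices to show $\mathcal{D} \subseteq \Omega(L)$. Given $M \cap \hat{L} \in \mathcal{D}$ and $x \in L$, pick $n$ with $x \in L_n$; noting that the natural inclusion $\hat{L_n} \hookrightarrow \hat{L}$ lies in $M$ and that $L_n \in M$, it is enough to check that $M \cap \hat{L_n}$ captures $x$. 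For this I would prove the auxiliary claim by contradiction: if $M \cap \hat{L_n}$ fails to capture $x$, then for every $z \in M \cap \hat{L_n}$ there is some $y \in M \cap L_n$ strictly between $z$ and $x$. Iterating this condition inside $M$, one extracts from $M \cap L_n$ a countable, dense-in-itself suborder bracketing $x$, hence a copy of $\mathbb{Q}$ in $L_n$, contradicting that $L_n$ is scattered. (Equivalently, this goes through by induction on the Hausdorff rank of $L_n$.)

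For the reverse direction, suppose $\Gamma(L)$ is non-stationary and fix a club $\mathcal{E} \subseteq \Omega(L)$ which, after enlarging, we may take of the form $\mathcal{E} = \{ M \cap \hat{L} : M \in \mathcal{E}' \}$ for a club $\mathcal{E}'$ of countable $M \prec H_\theta$. For every $x \in L$ and every $M \in \mathcal{E}'$ containing enough parameters, capture assigns a unique $z_M(x) \in \hat{L} \cap M$ together with a side $\varepsilon_M(x) \in \{-,+\}$. I would use these germs to carve $L$ into countably many scattered pieces by a diagonal/reflection argument along a chain $\langle M_\alpha : \alpha < \omega_1 \rangle \subseteq \mathcal{E}'$ with union $\bigcup_\alpha M_\alpha \supseteq L$: define $L_{z,\varepsilon}$ to be the set of $x \in L$ whose germ stabilises at $(z,\varepsilon)$ for cofinally many $\alpha$, and show that within each fiber a copy of $\mathbb{Q}$ would be reflected into some $M_\alpha$ by elementarity, producing an element of $M_\alpha \cap L$ strictly between $z$ and $x$ and contradicting capture. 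This yields $L = \bigcup \{ L_{z,\varepsilon} : z \in \bigcup_\alpha M_\alpha \cap \hat{L},\ \varepsilon \in \{-,+\}\}$, a countable scattered decomposition after regrouping.

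The main obstacle is the reverse direction: it is not enough to work with a single capturing submodel, and the stationarity of $\Omega(L)$ must be used to package the witness germs into a genuinely \emph{countable} partition into \emph{scattered} suborders. Getting from the pointwise capture data $(z_M(x), \varepsilon_M(x))$ to a global $\sigma$-scattered decomposition, and checking that each fiber really is scattered rather than merely capturable, is the delicate step; it is where the reflection between a generic model and $H_\theta$ must be invoked, and it is the point at which one needs Fact~2.3 (or its variant) to rule out an embedded copy of $\mathbb{Q}$ inside each fiber.
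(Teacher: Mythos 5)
The paper does not actually prove this statement; it is imported from Ishiu--Moore \cite{no_real_Aronszajn}, and the closest thing in the paper is the proof of Lemma~\ref{scattered_wellordered}, which adapts the machinery of the harder direction. Measured against that, your sketch has the right general flavor (elementary submodels and capture), but both directions contain genuine gaps.

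In the forward direction, the reduction ``it is enough to check that $M\cap\hat{L_n}$ captures $x$'' is false as stated: a witness $z$ for capture relative to $L_n$ only guarantees that no element of $M\cap L_n$ lies strictly between $z$ and $x$, and elements of $M\cap L_m$ for $m\neq n$ can intrude there. For instance, if $L_0=\{x,y\}$ and $L_1$ is a copy of $\omega_1$ increasing to the cut just below $x$, with $y$ below all of $L_1$, then $y$ captures $x$ relative to $L_0$ but fails relative to $L_0\cup L_1$; capture of $x$ relative to $L$ does hold, but via a witness your argument never produces. One cannot repair this by taking the ``closest'' witness over all $n$, since witnesses from different pieces may sit on opposite sides of $x$ and there are infinitely many pieces. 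The auxiliary claim is also wrong: failure of capture does not yield a dense-in-itself subset of $M\cap L_n$, only $\omega$-sequences accumulating at the gap around $x$ from one or both sides (order type $\omega+\omega^*$ around the gap is perfectly consistent with failure of capture), so no copy of $\mathbb{Q}$ and hence no contradiction with scatteredness is extracted. The workable route is the one you mention only parenthetically: induction on Hausdorff rank, using that the convex blocks of a Hausdorff decomposition lying in $M$ have their bounding cuts in $M$, which is what genuinely separates $x$ from the rest of $M\cap L$; and this must be organized to handle all countably many pieces simultaneously.

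In the reverse direction, your fibers $L_{z,\varepsilon}$ are indexed by all $z\in\bigcup_\alpha M_\alpha\cap\hat{L}$, an uncountable set, so you do not obtain a countable decomposition; ``after regrouping'' is exactly the missing content. The mechanism that makes the decomposition countable---visible in the paper's proof of Lemma~\ref{scattered_wellordered}---is that the trace $\xi\mapsto g_x(\xi)$ of capture points of $x$ along the chain of models has \emph{finite} range, so one can stratify $L$ by the length of the finite sequence $\sigma(x)$ of capture points and first-change ordinals, with $x\mapsto\sigma(x)$ order preserving into an iterated sum of scattered pieces. Your sketch never establishes this finiteness. Moreover, the claim that a copy of $\mathbb{Q}$ inside a fiber ``would be reflected into some $M_\alpha$'' does not go through: the fibers are defined from the chain $\Seq{M_\alpha : \alpha<\omega_1}$ and are not elements of any $M_\alpha$, so elementarity cannot be applied to them. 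Finally, a chain of length $\omega_1$ covers $L$ only when $|L|=\aleph_1$; for larger $L$ one needs an induction on $|L|$ as in Lemma~\ref{scattered_wellordered}.
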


The following lemma will be used in order to determine which linear orders 
are $\sigma$-well ordered. 
Most likely an equivalent of this lemma exists in classical texts, but since we 
did not find any proof for it and for more clarity we include the proof. 
Our proof uses the ideas in the proof of the previous theorem from \cite{no_real_Aronszajn}.
\begin{lem}\label{scattered_wellordered}
Assume $L$ is a linear order  which does not have a copy of $\omega^*_1$. 
Then $L$ is  $\sigma$-well ordered  iff it is $\sigma$-scattered.
\end{lem}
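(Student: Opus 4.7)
The plan is to reduce to the scattered case and then induct on the Hausdorff rank. The forward direction is immediate, since every well-ordered set is scattered (no copy of $\Qbb$ embeds into a well-order), so any $\sigma$-well ordered linear order is $\sigma$-scattered.

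For the converse, suppose $L$ is $\sigma$-scattered with no copy of $\omega_1^*$. Writing $L = \bigcup_{n \in \omega} L_n$ with each $L_n$ scattered, each $L_n$ still avoids $\omega_1^*$, and a countable union of $\sigma$-well ordered sets is $\sigma$-well ordered, so it suffices to prove the following: every scattered $K$ with no copy of $\omega_1^*$ is $\sigma$-well ordered. I will establish this by induction on the Hausdorff rank of $K$, invoking Hausdorff's classical theorem that every scattered linear order is obtained from the singleton by iterated well-ordered and reverse-well-ordered sums. At a positive stage, write $K = \sum_{i \in I} K_i$ where $I$ is an ordinal or a reverse ordinal and each $K_i$ has lower rank; the inductive hypothesis gives $K_i = \bigcup_n W_i^n$ with each $W_i^n$ well-ordered. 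If $I$ is an ordinal, then $\sum_{i \in I} W_i^n$ is a well-ordered sum of well-ordered sets and hence itself well-ordered for every $n$, so $K = \bigcup_n \sum_{i \in I} W_i^n$ exhibits $K$ as $\sigma$-well ordered. If $I$ is the reverse of an ordinal $\beta$, the hypothesis forces $\beta$ to be countable: otherwise $\omega_1^* \hookrightarrow I$, and picking one point from each $K_i$ along this copy would embed $\omega_1^*$ into $K$; hence $I$ is countable and $K$ is a countable sum of $\sigma$-well ordered pieces, again $\sigma$-well ordered.

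The only nontrivial moment, and the single place the hypothesis is actually used, is the reverse-well-ordered case of the induction: ruling out copies of $\omega_1^*$ in $K$ is precisely what prohibits an uncountable reverse-well-ordered index in its Hausdorff decomposition. There is no substantive obstacle beyond this. The author's remark that the proof follows the template of Theorem \ref{Omega} suggests an alternative route via a one-sided ``capture from below'' variant of $\Omega(L)$, showing that $L$ is $\sigma$-well ordered iff the corresponding analogue of $\Gamma(L)$ is non-stationary in $[\hat L]^\omega$; this would parallel Theorem \ref{Omega} more closely but is noticeably longer than the Hausdorff-rank argument sketched above, so I would only reach for it if a uniform invariant framework were needed downstream.
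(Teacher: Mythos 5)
Your proof is correct, but it takes a genuinely different route from the paper's. You reduce to a single scattered order and induct on Hausdorff rank, invoking Hausdorff's classical theorem that every scattered linear order is obtained from singletons by iterated well-ordered and reverse-well-ordered sums; the no-$\omega_1^*$ hypothesis is used exactly once, to force the index set of a reverse-well-ordered sum to be countable. The paper instead inducts on $|L|$ and runs the Ishiu--Moore capturing machinery: a continuous chain $\langle M_\xi : \xi \in \kappa \rangle$ of elementary submodels, the observation that the function $\xi \mapsto g_x(\xi)$ recording the cut via which $M_\xi$ captures $x$ is decreasing with finite range, and an encoding of each $x$ by the finite sequence $\sigma(x)$ of cuts and first-appearance ordinals, which splits $L$ into countably many pieces, each embeddable into an iterated sum of smaller $\sigma$-well ordered sets. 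Your argument is shorter and more elementary granted Hausdorff's theorem and isolates the role of the hypothesis cleanly; the paper's avoids quoting Hausdorff's structure theorem and stays uniform with the $\Omega/\Gamma$ framework used throughout (hence the author's remark that it reuses the ideas behind Theorem \ref{Omega}). The one small gap in your write-up is that when you extract $\omega_1^*$ from an uncountable reverse-well-ordered index set you implicitly assume the summands $K_i$ are nonempty; either discard empty summands first, or note that if only countably many are nonempty then $K$ is a countable union of $\sigma$-well ordered pieces and you are done anyway.
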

\begin{proof}
Assume $L$ is a linear order of size $\kappa$ which does not have a copy of $\omega^*_1$ 
and which is $\sigma$-scattered.
We use induction on $\kappa$.
In particular, assume that every suborder $L' \subset L$ of size less than $\kappa$ is $\sigma$-well ordered. 
We will show that $L$ is $\sigma$-well ordered.
Let $\theta$ be a regular cardinal such that $\mathcal{P}(L) \in H_\theta$.
Let $\Seq{M_\xi: \xi \in \kappa}$ be a continuous $\in$-chain of  elementary submodels of 
$H_\theta$ such that $L, \Omega(L)$ are in $M_0$.
Moreover assume that, for each $\xi \in \kappa$, $ \xi \subset M_\xi $ and $|M_\xi|= |\xi| + \aleph_0$.
Observe that for all $x \in L$ and $\xi \in \kappa$ there is a unique $z \in \hat{L} \cap M_\xi$
such that $M_\xi$ captures $x$ via $z$.
Moreover, if $M_\xi$ captures $x$ via $z$ then $x \leq z$.
This is because $\omega_1^*$ does not embed into $L$.
Let $\hat{L}_\xi$ be the set of all $z \in \hat{L} \cap M_\xi$ such that for some 
$x \in L$, $M_\xi$ captures $x$ via $z$.
In particular, $|\hat{L}_\xi| \leq |M_\xi| < \kappa$ for each $\xi \in \kappa$.
We note that for each $\xi \in \kappa$, $\hat{L}_\xi$ embeds into $L$.
Then since the size of $\hat{L}_\xi$ is less than $\kappa$, it is $\sigma$-well ordered.

For each $x \in L$ let  $g_x : \kappa \longrightarrow \hat{L}$
such that for all $\xi \in \kappa$, 
$g_x(\xi) \in M_\xi \cap \hat{L}$ and $M_\xi$ captures $x$ via $g_x(\xi)$.
We note that the map $x \mapsto g_x$ is order preserving 
when we consider the lexicoraphic order on all functions from $\kappa$ to $\hat{L}$.
The function $g_x$ is decreasing  because $L$ does not have a copy of $\omega_1^*$.
Also $\range(g_x)$ is finite, because for all $\xi \in \kappa$, $M_\xi$ captures all elements of $L$.

For each $x \in L$ with $|\range(g_x)|= n+1$ we consider the
strictly decreasing finite sequence  $\Seq{z_0(x), z_1(x),..., z_n(x)}$,
such that for each $i \leq n,$ $z_i(x) \in \range(g_x)$. 
Note that if $i < j \leq n,$ $g_x(\xi) = z_i(x), g_x(\eta) = z_j(x)$ then $\xi < \eta$. 
In other words, the cuts $z_i(x)$ appear in the range of $g_x$ in the order we considered in the finite sequence 
associated to $x$.
In particular, $z_n(x) = x$.
For each $i \leq n$, let $\xi_i(x) = \min \{ \xi \in \kappa : g_x (\xi) = z_i(x) \}$.
For each $x \in L$, let $$\sigma(x) = \Seq{z_0(x), \xi_1(x), z_1(x), \xi_2(x), z_2(x),..., \xi_n(x), z_n(x)}.$$

Let $U = \{ \sigma(x) \rest (2n+1) : x \in L \wedge n \in \omega \}$
and $U_m = \{ \sigma(x) \rest (2n+1) : x \in L \wedge n \in m \}$ for each $m \in \omega$.
We consider an order on $U$ as follows. For $\sigma, \tau$ in $U$ we let $\sigma < \tau$ iff
either $\tau$ is an initial segment of $\sigma$ or 
$\sigma \lex \tau$. 
Also let $L_m = \{ x \in L : \sigma(x) \in U_m \}$.
It is easy to see that $x \mapsto \sigma(x)$ is an order preserving map from $L_m$ to $U_m$.
But each  $U_m$ is an iterated sum of $\sigma$-well ordered sets. 
Therefore, $L = \bigcup\limits_{m \in \omega}L_m$ is $\sigma$-well ordered.
\end{proof}
Now we review some definitions and facts about the forcings which we are going to use.
\begin{defn}\cite{second}
Assume $X$ is uncountable and $S \subset [X]^\omega$ is stationary. A poset $P$ is 
said to be \emph{$S$-complete}
if every descending $(M, P)$-generic
sequence $\langle p_n: n\in \omega \rangle$ has a lower bound, for all $M$ with $M \cap X \in S$ and
$M$ suitable for $X,P$. 
\end{defn}
We note that $S$-complete posets preserve the  stationary
subsets of $S$. 
It is also easy to see that if $X,S$ are as above and $P$ is an $S$-complete
forcing, then it preserves $\omega_1$ and adds no new countable sequences of ordinals.
\begin{lem} \cite{second}
Assume $X$ is uncountable and $S\subset[X]^\omega$ is stationary. Then $S$-completeness is 
preserved under countable support iterations.
\end{lem}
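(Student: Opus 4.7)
The plan is to induct on the length $\alpha$ of the countable support iteration $\Seq{P_\beta, \dot{Q}_\beta : \beta < \alpha}$, showing at each stage that $P_\alpha$ is $S$-complete. Throughout, I use the auxiliary facts noted in the paper just before the statement: any $S$-complete forcing preserves $\omega_1$, adds no new countable sequences of ordinals, and keeps $S$ stationary. In particular, whenever $M \prec H_\theta$ is countable with $M \cap X \in S$ and is suitable for the whole iteration, and $G_\beta$ is $P_\beta$-generic over $V$, one has $M[G_\beta] \cap X = M \cap X$, which therefore remains in $S$ in $V[G_\beta]$. This is the structural fact that lets the induction carry stationarity of $S$ through intermediate extensions.

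For the successor step $\alpha = \beta+1$, take a countable suitable $M$ with $M \cap X \in S$ and a descending $(M, P_\alpha)$-generic sequence $\Seq{(p_n, \dot{q}_n) : n \in \omega}$. The projections $p_n$ form a descending $(M, P_\beta)$-generic sequence, so by the inductive hypothesis they have a lower bound $p \in P_\beta$. Working in $V[\dot{G}_\beta]$, the tail $\Seq{\dot{q}_n : n \in \omega}$ is a descending $(M[\dot{G}_\beta], \dot{Q}_\beta)$-generic sequence, and $M[\dot{G}_\beta] \cap X \in S$ by the observation above, so $S$-completeness of $\dot{Q}_\beta$ in the extension yields a lower bound $\dot{q}$. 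The pair $(p, \dot{q})$ is the required lower bound in $P_\alpha$.

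For limits of uncountable cofinality, each condition has countable support, so the sequence $\Seq{p_n}$ lives in some $P_\beta$ for $\beta = \sup_n \sup(\mathrm{supp}(p_n)) < \alpha$, and a lower bound produced by the inductive hypothesis at $\beta$ works in $P_\alpha$ after extending by trivial conditions. The substantive case is cofinality $\omega$. Here one picks an increasing cofinal sequence $\Seq{\alpha_n : n \in \omega}$ in $M \cap \alpha$ and enumerates the dense subsets of $P_\alpha$ lying in $M$ as $\Seq{D_k : k \in \omega}$. One then builds a fusion $\Seq{q_n : n \in \omega}$ with $q_n \in P_{\alpha_n}$, $q_n \rest \alpha_m = q_m$ for $m \leq n$, each $q_n$ extending $p_n \rest \alpha_n$, and $q_n$ forcing that a chosen name in $M$ for an element of $D_n$ is met below level $\alpha_n$. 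At each stage the inductive $S$-completeness of $P_{\alpha_n}$ supplies the needed lower bound, using that $M \cap X$ remains in $S$ in every intermediate extension. The fused condition $q$ defined by $q \rest \alpha_n = q_n$ and trivially on $[\sup_n \alpha_n, \alpha)$ is then a condition in $P_\alpha$ and a lower bound of $\Seq{p_n}$.

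The main obstacle is exactly this cofinality-$\omega$ fusion: one must simultaneously build supports, meet dense sets in $M$, and maintain that each intermediate $q_n$ is $(M, P_{\alpha_n})$-generic below $p_n \rest \alpha_n$, so that the inductive hypothesis is applicable at every step. The delicate point is interleaving the bookkeeping so that the generic sequence at each coordinate truly is descending and generic, and verifying that the ``stationarity-in-the-extension'' fact propagates so $M[\dot{G}_{\alpha_n}] \cap X \in S$ holds when we invoke $S$-completeness at stage $\alpha_n$. Once this is handled, the verification that the fused $q$ forces the tail at each coordinate to yield a lower bound is routine.
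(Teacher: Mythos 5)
The paper does not actually prove this lemma; it is quoted from \cite{second}, so I am comparing your argument against the standard proof of such preservation theorems (which is what \cite{second} carries out, by a single recursion on coordinates rather than by induction on the length of the iteration).

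Your successor step and your treatment of uncountable-cofinality limits are fine, and the key structural observation --- that a lower bound of $\Seq{p_n\rest\beta : n\in\omega}$ forces $M[\dot G_\beta]\cap X = M\cap X\in S$, so that $S$-completeness of $\dot Q_\beta$ can be invoked inside $\mathbf{V}[\dot G_\beta]$ --- is exactly the right one. The gap is in the cofinality-$\omega$ limit case. Your inductive hypothesis is only the bare statement ``$P_{\alpha_n}$ is $S$-complete,'' i.e.\ every descending $(M,P_{\alpha_n})$-generic sequence has \emph{some} lower bound. That does not let you build the coherent sequence $\Seq{q_n : n\in\omega}$ with $q_{n+1}\rest\alpha_n = q_n$: at stage $n+1$ you need a lower bound of $\Seq{p_k\rest\alpha_{n+1} : k\in\omega}$ whose restriction to $\alpha_n$ is the already-chosen $q_n$, and nothing in the plain hypothesis produces one. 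You must either strengthen the induction (for $\beta<\gamma$ in $M$, every lower bound of the restricted sequence in $P_\beta$ extends to a lower bound in $P_\gamma$) or, more cleanly, drop the induction on length altogether and define the lower bound $q$ by one transfinite recursion on the coordinates $\beta\in\bigcup_n\mathrm{supp}(p_n)\subseteq M\cap\alpha$: given that $q\rest\beta$ is a lower bound of $\Seq{p_n\rest\beta}$, it forces $\Seq{p_n(\beta)}$ to be a descending $(M[\dot G_\beta],\dot Q_\beta)$-generic sequence, so a name $q(\beta)$ for a lower bound exists, and limit coordinates are free since supports are countable. Separately, the dense-set bookkeeping in your fusion is a red herring: the sequence $\Seq{p_n}$ is \emph{given} to be $(M,P_\alpha)$-generic, so its restrictions to each $P_{\alpha_n}$ are already $(M,P_{\alpha_n})$-generic and there is nothing further to meet; moreover a dense subset of $P_\alpha$ in $M$ need not be decided ``below level $\alpha_n$,'' so that clause of your construction does not make sense as stated.
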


\begin{lem} \cite{second}\label{branchcs}
Assume $T$ is an $\omega_1$-tree 
which has no Aronszajn subtree 
in the ground model $\mathbf{V}$. Also assume
 $\Omega(T)$ is stationary  and
$P$ is an $\Omega(T)$-complete forcing.
Then  $T$ has no Aronszajn subtree in $\mathbf{V}^P$.
Moreover, $P$ adds no new branches to $T$.
\end{lem}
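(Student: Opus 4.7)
The plan is to handle both conclusions by a single generic descent. Suppose, for a contradiction, that $p_0 \in P$ either forces $\dot{b}$ to be a new cofinal branch of $T$ or forces $\dot{A}$ to be an Aronszajn subtree of $T$. Choose a regular $\theta$ with $\mathcal{P}(T) \in H_\theta$ and a countable $M \prec H_\theta$ containing $T$, $P$, $p_0$ and the relevant name, with $M \cap \mathcal{B}(T) \in \Omega(T)$; such $M$ exists since $\Omega(T)$ is stationary. Put $\delta = M \cap \omega_1$ and enumerate $\mathcal{B}(T) \cap M = \Seq{c_n : n \in \omega}$.

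I next build a descending $(M,P)$-generic sequence $\Seq{p_n : n \in \omega}$ below $p_0$ that meets, in the branch case, each dense set
$D_n^b = \{q \leq p_0 : \exists \alpha\ (q \Vdash \dot{b}(\alpha) \neq c_n(\alpha))\}$
(dense below $p_0$ since $\dot{b}$ is forced new) and decides $\dot{b}(\alpha)$ for cofinally many $\alpha \in M \cap \delta$; and meets, in the Aronszajn case, each
$D_n^A = \{q \leq p_0 : \exists \alpha\ (q \Vdash c_n(\alpha) \notin \dot{A})\}$
(dense below $p_0$ since $\dot{A}$ is forced to have no cofinal branch) and decides $t \in \dot{A}$ for every $t \in T \rest \alpha$ with $\alpha \in M \cap \omega_1$. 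Elementarity places each witness $\alpha_n$ in $M \cap \delta$. By $\Omega(T)$-completeness the sequence has a lower bound $q^* \in P$, and the genericity makes $q^*$ decide $\dot{b} \rest \delta$ to be some fixed $\sigma \in \mathbf{V}$ (branch case) or $\dot{A} \cap (T \rest \{\beta : \beta < \delta\})$ to be some fixed $A_\delta \in \mathbf{V}$ (Aronszajn case).

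To close both cases, I produce a node $u \in T$ at level $\delta$ whose set of predecessors $\sigma_u$ equals $\sigma$ (branch case) or is contained in $A_\delta$ (Aronszajn case). In the branch case, cofinality of $\dot{b}$ together with non-branching at limits pins $u$ down as the unique element of $T$ at level $\delta$ extending $\sigma$. In the Aronszajn case, $q^*$ still forces $\dot{A}$ to have a node at level $\delta$; choose $q' \leq q^*$ and $u \in T$ at level $\delta$ with $q' \Vdash u \in \dot{A}$, and use downward closure of $\dot{A}$ together with the definition of $A_\delta$ to get $\sigma_u \subseteq A_\delta$. In both cases $u$ is a ground-model object. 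Applying "no Aronszajn subtree" to the $\omega_1$-tree $T_u$ (nontrivial since $T$ is ever branching) yields a cofinal branch $b \in \mathcal{B}(T)$ through $u$. By the capturing property some $c_n \in M$ has $\Delta(b, c_n) \geq \delta$; that is, $c_n \rest \delta = b \rest \delta$ agrees with $\sigma_u$ strictly below level $\delta$. This contradicts the construction: in the branch case $q^* \Vdash \dot{b}(\alpha_n) \neq c_n(\alpha_n)$ while simultaneously $q^* \Vdash \dot{b}(\alpha_n) = \sigma(\alpha_n) = c_n(\alpha_n)$; in the Aronszajn case $q^* \Vdash c_n(\alpha_n) \notin \dot{A}$ while simultaneously $c_n(\alpha_n) = \sigma_u(\alpha_n) \in A_\delta$ and so $q^* \Vdash c_n(\alpha_n) \in \dot{A}$.

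The main obstacle is the Aronszajn half: given only a name for an abstract Aronszajn subtree, one must extract a specific $\mathbf{V}$-node $u$ at level $\delta$ whose predecessors lie in $A_\delta$, and then invoke the ground-model "no Aronszajn subtree" assumption in the cone $T_u$ to produce a cofinal branch $b$ that fits under capturing. The branch-preservation conclusion is a clean special case of the same scheme, so once the Aronszajn argument is set up the "moreover" clause requires no separate work.
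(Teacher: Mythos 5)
Your proof is correct and is essentially the standard argument: the paper itself does not prove this lemma but cites \cite{second}, and the capturing argument you give (decide $\dot{b}\rest\delta$, resp.\ $\dot{A}\cap (T\rest\delta)$, along an $(M,P)$-generic sequence with $M\in\Omega(T)$, realize the decided trace by a ground-model node $u\in T_\delta$, use ``no Aronszajn subtree'' to thread a ground-model branch through $u$, and contradict capturing) is exactly the intended one. The only cosmetic point is that membership in $\Omega(T)$ should be read as $M\cap\widehat{\mathcal{B}(T)}\in\Omega(T)$ in the paper's notation, which does not affect the argument.
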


\section{A Generic Element of $\mathcal{C}$}

In this section we  introduce the  forcing which adds a generic lexicographically ordered $\omega_1$-tree $T$.
The tree $T$ has no Aronszajn subtrees. 
Moreover, the set of all cofinal branches of $T$, which is denoted by $B$, has no copy of $\omega_1^*$.
In the next section, by iterating two types of posets, we make $B$ a minimal element of $\mathcal{C}$. In particular, 
we will  ensure that $B$ is not $\sigma$-scattered.

\begin{defn}\label{Q}
Fix a set $\Lambda$ of size $\aleph_1$.
The forcing $Q$ is the poset consisting of all conditions $(T_q, b_q, d_q)$ such that the following hold.
\begin{enumerate}
\item  $T_q \subset \Lambda$ is a lexicographically\footnote{
Note that the lexicoraphic order here is independent of
any structure on $\Lambda$ if it exists. 
In other words, this order which we refer to as $\lex$ is determined by the condition $q$.} ordered
 countable tree of height $\alpha_q + 1$ with the property that for all $t \in T_q$ there is 
$s \in (T_q)_{\alpha_q}$ such that $t \leq_{T_q} s$.
\item  $b_q$ is a bijective  map from a countable subset of $\omega_1$ onto $(T_q)_{\alpha_q}$.
\item \label{d}
The map $d_q : \dom(b_q) \longrightarrow \omega_1$ has the property that if 
$b_q(\xi)= t,$ $ b_q(\eta)=s $ and $t \lex s$ then $\Delta(t,s) < d_q(\xi)$.
\end{enumerate}
We let $q \leq p$ if the following hold.
\begin{enumerate}
\item $T_p \subset T_q$ and $(T_p)_{\alpha_p} = (T_q)_{\alpha_p}$. 
\item For all $s,t$ in $T_p$, $s \lex t$ in $T_p$ if and only if $s \lex t$ in $T_q$.
\item For all $s,t$ in $T_p$, $s \leq_{T_p}t$ if and only if $s \leq_{T_q}t$.
\item $\dom(b_p) \subset \dom(b_q)$.
\item For all $\xi \in \dom(b_p)$, $b_p(\xi) \leq_T b_q(\xi)$.
\item  $d_p \subset d_q$. 
\end{enumerate}
\end{defn}

\begin{lem}\label{countably_closed}
Assume $\Seq{q_n : n \in \omega}$ is a decreasing sequence of conditions in $Q$,
$m \leq \omega$
and for each $i \in m$ let
$c_i \subset \bigcup\limits_{n \in \omega} T_{q_n}$ be a cofinal branch. 
Then there is a lower bound $q $ for the sequence $\Seq{q_n : n \in \omega}$
in which every $c_i$ has a maximum with respect to the tree order in $T_q$.
Moreover, for every $t \in (T_q)_{\alpha_q}$ either there is $i \in m$ such that $t$ is above all elements of $c_i$
or there is $\xi \in D= \bigcup\limits_{n \in \omega}\dom(b_{q_n})$ 
such that $t$ is above all elements of $\{b_{q_n}(\xi) : n \in \omega \wedge \xi \in \dom(b_{q_n})\}$. 
In particular, $Q$ is $\sigma$-closed.
\end{lem}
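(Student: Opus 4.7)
The plan is to build the lower bound $q$ by hand. First reduce to the case where $\alpha := \sup_n \alpha_{q_n}$ is a genuine limit: if $\alpha_{q_n}$ is eventually constant, then clause (1) of the ordering forces $q_n$ itself to be eventually constant, and that eventual value is the required lower bound. So assume $\alpha$ is a limit, set $T^{*} = \bigcup_n T_{q_n}$, and aim to put $\alpha_q = \alpha$.

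Next identify the cofinal branches of $T^{*}$ which must carry new top-level elements. For $\xi \in D := \bigcup_n \dom(b_{q_n})$, clause (5) of the ordering makes $B_\xi := \{b_{q_n}(\xi) : n \geq n_\xi\}$ a cofinal branch of $T^{*}$, and injectivity of each $b_{q_n}$ forces $\xi \mapsto B_\xi$ to be one-to-one. Together with the given $c_i$ this yields a countable family $\mathcal{B}$ of cofinal branches. For each $b \in \mathcal{B}$ pick a fresh distinct point $t_b \in \Lambda \smallsetminus T^{*}$ and set $T_q = T^{*} \cup \{t_b : b \in \mathcal{B}\}$, extending the tree order by declaring $s <_{T_q} t_b$ iff $s \in b$, and extending the lex order on the new top level by $t_b \lex t_{b'}$ iff $b(\beta) \lex b'(\beta)$ with $\beta = \Delta(b,b')$ (a comparison already living inside some $T_{q_n}$). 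Put $b_q(\xi) = t_{B_\xi}$ for $\xi \in D$, and for each $c_i$ not of the form $B_\xi$ pick a fresh $\eta_i \in \omega_1 \smallsetminus D$ and set $b_q(\eta_i) = t_{c_i}$. Define $d_q$ to extend every $d_{q_n}$ (mutually compatible by clause (6)), and pick $d_q(\eta_i)$ to be any countable ordinal larger than $\alpha$.

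The main obstacle is verifying clause (3) of Definition \ref{Q}, since $d_q(\xi)$ for $\xi \in D$ is frozen to $d_{q_n}(\xi)$ and cannot be enlarged. For $\xi \neq \eta$ both in $D$, pick $n$ with $\xi,\eta \in \dom(b_{q_n})$; then $\Delta(B_\xi, B_\eta) = \Delta(b_{q_n}(\xi), b_{q_n}(\eta)) < d_{q_n}(\xi) = d_q(\xi)$ by clause (3) applied in $q_n$. For $\xi \in D$ paired with a new branch $c_i \neq B_\xi$, the key observation is that $c_i$ must eventually disagree with $B_\xi$ at the top level of some $T_{q_m}$ (otherwise $c_i$ and $B_\xi$ would agree on an unbounded set of levels, forcing $c_i = B_\xi$); writing $c_i(\alpha_{q_m}) = b_{q_m}(\zeta)$ with $\zeta \neq \xi$, clause (3) in $q_m$ gives $\Delta(c_i, B_\xi) = \Delta(b_{q_m}(\zeta), b_{q_m}(\xi)) < d_{q_m}(\xi) = d_q(\xi)$. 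Pairs of new branches are handled by the choice $d_q(\eta_i) > \alpha$. Once (3) holds, the remaining clauses of Definition \ref{Q} are immediate, the saturation condition follows since each $t \in T_{q_n}$ lies below some $b_{q_n}(\xi)$ and hence below $t_{B_\xi}$, the description of $(T_q)_{\alpha_q}$ demanded by the lemma is built in, and $\sigma$-closedness is the special case $m=0$.
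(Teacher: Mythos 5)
Your proof is correct and follows essentially the same route as the paper's: reduce to the case where $\alpha=\sup_n\alpha_{q_n}$ is a limit, take the union tree, cap each branch $B_\xi$ ($\xi\in D$) and each remaining $c_i$ with a fresh top node, freeze $d_q$ on $D$ and set it above $\alpha$ on the new indices, and verify clause (3) of Definition \ref{Q} by pushing the computation of $\Delta$ down to a level $\alpha_{q_m}$ where the two branches already disagree. (One small imprecision: clause (3) bounds $\Delta$ by $d$ of the lex-\emph{smaller} index, so your mixed case yields $\Delta<d_q(\xi)$ only when $B_\xi\lex c_i$ --- which is exactly the direction needed, the reverse direction being covered by $d_q(\eta_i)>\alpha$.)
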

\begin{proof}
For each $n \in \omega$, let $\alpha_n = \alpha_{q_n}$ and $T_n = T_{q_n}$. 
If the set of all $\alpha_n$'s has  a maximum, it means that after some $n$, the sequence $q_n$ is constant.
So without loss of generality assume $\alpha = \sup \{ \alpha_n : n \in \omega \}$ is a limit ordinal above all 
$\alpha_n$'s. Let $T = \bigcup\limits_{n \in \omega} T_n$.
For each $\xi \in D= \bigcup\limits_{n \in \omega}\dom(b_{q_n})$,
let $b_\xi$ be the set of all $t \in T$ such that for some $n \in \omega$, $t \leq b_{q_n}(\xi)$.
Observe that $b_\xi$ is a cofinal branch in $T$.
Since we are going to put an element on top of every $b_\xi$,
from now on, assume that $c_i$'s are different from $b_\xi$.

Now we are ready to define the lower bound $q$.
We let $\alpha_q = \alpha $ and obviously  $(T_q)_{< \alpha} = T$.
We put distinct element $t_\xi$ on top of $b_\xi$ for each $\xi \in D$.
We also put distinct element $s_i$ on top of $c_i$ for each $i \in m$.
Therefore, $(T_q)_\alpha = \{t_\xi : \xi \in D \} \cup \{s_i : i \in m \}$.
Let $E \subset \omega_1 \setminus D$ such that $|E|= m$.
Let  $b_q : D \cup E \longrightarrow (T_q)_\alpha $ be any bijective  function such that
$b_q(\xi) = t_\xi$ for each $\xi \in D$.
For each $\xi \in D$, let $d_q(\xi) = d_{q_n}(\xi)$ where $n \in \omega$ such that $\xi \in \dom(d_{q_n})$.
For each $\eta \in E$ let $d_q(\eta) = \alpha + 1$. 

We need to show that $q$ is a lower bound in $Q$.
We only show Condition \ref{d} of Definition \ref{Q} for $q \in Q$. 
The rest of the conditions and the fact that $q$ is an extension of all $q_n$'s are obvious.
Let $A = \{ t_\xi : \xi \in D \}$, $S = \{ s_i : i \in m \}$, and $u \lex v$ be two distinct elements
in $(T_q)_\alpha$. 
If $u \in S$, then $\Delta(u,v)< \alpha < d_q(\eta)$, where $\eta \in E$ such that  $b_q(\eta) = u$.
If $u,v$ are both in $A$, and $\xi, \xi'$ are in $D$ such that $b_q(\xi) =u, b_q(\xi') =v$,
let $n \in \omega$ such that $\xi , \xi'$ are in $\dom(b_{q_n}).$
Then $\Delta(u,v) = \Delta(b_{q_n}(\xi), b_{q_n}(\xi')) < d_{q_n}(\xi) = d_q(\xi)$.
If $u \in A, v \in S$ and $\xi \in D$ such that $b_q(\xi) = u$, 
fix $n \in \omega$ such that $\xi \in \dom(b_{q_n})$ and $\alpha_{q_n} > \Delta (u,v)$.
Let $u',v'$ be the elements in $T_{\alpha_n}$ which are below $u,v$ respectively.
It is obvious that $b_{q_n}(\xi) = u'$.
Then $\Delta(u,v) = \Delta(u',v') < d_{q_n}(\xi)= d_q(\xi)$.
Therefore $q$ is a condition in $Q$.
\end{proof}

We will use the following terminology and notation regarding the forcing $Q$.
Assume $G$ is a generic filter for $Q$. We let $T= \bigcup_{q \in G}T_q$.
We also let $B=(\mathcal{B}(T), <_{\textrm{lex}})$.
By $b_\xi$ we mean the set of $t \in T$ such that for some $q \in G$, $b_q(\xi)=t$.

\begin{defn}
For every $\xi \in \omega_1$, $d(\xi) =\sup \{ \Delta(b_\xi, b_\eta) : b_\xi <_{\textrm{lex}} b_\eta \}$, and
if $b = b_\xi$ we sometimes use $d(b)$ instead of $d(\xi)$.
\end{defn}
It is worth pointing out that,
by Fact \ref{no_omega_1*}, the role of $d$  is to 
control $\lex$ so that $(B, \lex)$ has no copy of $\omega^*_1$.
The behavior of $d$ plays an essential role from the technical point of view, mostly in the density lemmas 
for the forcings which we introduce and use in the next section.
\begin{lem}\label{countabletoone}
The function $d$ is a countable to one function, i.e. for all $\alpha \in \omega_1$ there are 
countably many $\xi \in \omega_1$ with $d(\xi) = \alpha$.
\end{lem}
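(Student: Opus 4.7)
The plan is to prove the slightly stronger statement that for every $\alpha<\omega_1$ the set $\{\xi\in\omega_1:d(\xi)\leq\alpha\}$ is countable; the lemma follows immediately because $\{\xi:d(\xi)=\alpha\}\subseteq\{\xi:d(\xi)\leq\alpha\}$.

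The main step is a reformulation of $d(\xi)\leq\alpha$ in terms of what happens at the $\alpha$-th level of the generic tree $T$. Using that $T$ is a tree which does not branch at limit heights, I will verify that for any two distinct cofinal branches $b,b'$ we have $\Delta(b,b')\leq\alpha$ iff $b(\alpha)\neq b'(\alpha)$: once two branches have separated, they remain distinct at all higher levels, and two branches agreeing on every level strictly below a limit $\alpha$ must share the unique node at level $\alpha$ having that predecessor chain. Plugging this equivalence into the definition of $d$, the condition $d(\xi)\leq\alpha$ becomes: no indexed branch $b_\eta$ lying strictly above $b_\xi$ in the lexicographic order satisfies $b_\eta(\alpha)=b_\xi(\alpha)$.

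From this I define the assignment $\Phi(\xi)=b_\xi(\alpha)\in T_\alpha$ on $\{\xi:d(\xi)\leq\alpha\}$ and check that $\Phi$ is injective. Suppose $b_\xi(\alpha)=b_{\xi'}(\alpha)$ and, for contradiction, $\xi\neq\xi'$; then $b_\xi\neq b_{\xi'}$ (the enumeration $\zeta\mapsto b_\zeta$ is injective because the maps $b_q$ are bijective on each condition), and we may assume $b_\xi\lex b_{\xi'}$. Since $b_\xi(\alpha)=b_{\xi'}(\alpha)$, the previous paragraph gives $\Delta(b_\xi,b_{\xi'})>\alpha$, but then $b_{\xi'}$ is an indexed branch $\lex$-above $b_\xi$ with $b_{\xi'}(\alpha)=b_\xi(\alpha)$, contradicting $d(\xi)\leq\alpha$. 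Hence $\Phi$ is injective into $T_\alpha$, which is countable because $T$ is an $\omega_1$-tree, so $\{\xi:d(\xi)\leq\alpha\}$ is countable.

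The only step that needs genuine attention is the equivalence ``$\Delta(b_\xi,b_\eta)\leq\alpha$ iff $b_\xi(\alpha)\neq b_\eta(\alpha)$'', which must be checked against the standing assumptions on $\omega_1$-trees recalled in the Preliminaries (distinct nodes at the same limit level have distinct predecessor sets, so two branches cannot separate and later reconverge). Once this is in place, the rest of the proof is a one-line counting argument using countability of the levels of $T$.
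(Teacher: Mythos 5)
Your proof is correct and takes essentially the same route as the paper: both arguments observe that the assignment $\xi \mapsto b_\xi(\alpha)$ (the paper uses level $\alpha+1$) is injective on the relevant set of indices, because $d(\xi)\leq\alpha$ forces any $\lex$-larger branch to have split off from $b_\xi$ by level $\alpha$, and then conclude from the countability of the levels of $T$. Your version, which establishes the slightly stronger claim that $\{\xi : d(\xi)\leq\alpha\}$ is countable, is just a more detailed write-up of the same counting argument.
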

\begin{proof}
Assume that the set $A= \{ \xi : d(\xi) = \alpha \}$ is uncountable. Then for each 
pair of distinct ordinals $\xi , \eta$ in $A$, $b_\xi (\alpha+1) \neq b_\eta (\alpha +1)$. 
But this means that  $T$ has an uncountable level which is a contradiction. 
\end{proof}

\begin{lem}\label{rationals_copy}
For every $t_0 \in T$ and $\beta > \Ht(t)$, there is an $\alpha > \beta$ such that 
$(T_\alpha \cap T_{t_0}, \lex)$ contains a copy of the rationals.
\end{lem}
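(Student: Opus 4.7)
The plan is a density argument: I show that for any $p \in Q$ with $t_0 \in T_p$ and $\alpha_p > \beta$, there is an extension $q \leq p$ such that $T_q$ already contains a level $\alpha > \beta$ for which $(T_q)_\alpha \cap T_{t_0}$ carries a $\lex$-copy of $\mathbb{Q}$. Since the generic tree $T$ is the union of the $T_q$'s for $q$ in the generic filter and such $q$'s are dense below every condition containing $t_0$, the lemma follows.

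Fix $s_0 \in (T_p)_{\alpha_p}$ with $s_0 \geq_T t_0$ using clause (1) of Definition \ref{Q}. Enumerate a countable dense linear order without endpoints (e.g., $\mathbb{Q}$) as $\{r_n : n \in \omega\}$ and fix a countable ordinal $\gamma > \alpha_p + \omega$. By passing to a harmless preliminary extension, I may assume that $s_0$ has a distinguished top-level descendant $s_0^*$ labeled by a fresh ordinal with $d$-value $\gamma$; this is accomplished by giving $s_0$ an extra child at a new level with a freshly chosen label of $d$-value $\gamma$.

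Now construct a descending sequence $p = p_0 \geq p_1 \geq p_2 \geq \ldots$ in $Q$ with $\alpha_{p_{n+1}} = \alpha_{p_n} + 1$ and elements $u_i^n \in (T_{p_n})_{\alpha_{p_n}}$ (for $n \geq 1$, $0 \leq i \leq n$) above $s_0^*$, satisfying $u_i^{n+1} >_T u_i^n$ and $u_i^n \lex u_j^n$ iff $r_i < r_j$. At stage $p_0 \to p_1$, give $s_0^*$ two children $u_0^1, u_1^1$ at level $\alpha_{p_0}+1$, $\lex$-ordered by $r_0, r_1$: the $\lex$-rightmost inherits $s_0^*$'s ordinal and the other receives a fresh ordinal with $d$-value $\gamma$. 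At each stage $p_n \to p_{n+1}$ ($n \geq 1$), every element at level $\alpha_{p_n}$ is given a unique child (with $u_i^{n+1}$ the child of $u_i^n$, inheriting its label), and one extra child $u_{n+1}^{n+1}$ is added under the $u_j^n$ whose $r_j$ is $<$-adjacent to $r_{n+1}$ in $\{r_0, \ldots, r_n\}$, placed $\lex$-immediately past $u_j^{n+1}$ on the side matching $r_{n+1}$; this new element receives a fresh ordinal with $d$-value $\gamma$. Because children of different parents inherit their parents' $\lex$-order, the new sibling lands in precisely the required $\lex$-position. Clause (3) of Definition \ref{Q} is preserved throughout: all ordinals labeling descendants of $s_0^*$ have $d$-value $\gamma > \alpha_{p_n}+1$, and old ordinals from $p$ not touching $s_0^*$'s cone are only reused for unique descendants whose $\Delta$-distances to $\lex$-right neighbors match those at stage $p_0$.

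Finally, apply Lemma \ref{countably_closed} to $(p_n)$ with the countable family of cofinal branches $c_i \subset \bigcup_n T_{p_n}$, where $c_i$ is the downward closure of $\{u_i^n : n \geq \max(i, 1)\}$. The lower bound $q$ places a top element $t_i$ on each $c_i$ at level $\alpha_q = \sup_n \alpha_{p_n} > \beta$, all above $s_0^* \geq_T s_0 \geq_T t_0$; for $i \neq j$, $t_i \lex t_j$ iff $u_i^n \lex u_j^n$ for all large enough $n$ iff $r_i < r_j$, so $\{t_i : i \in \omega\}$ is a $\lex$-copy of $\mathbb{Q}$ in $(T_q)_{\alpha_q} \cap T_{t_0}$. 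The main technical obstacle throughout is keeping clause (3) of Definition \ref{Q} intact while repeatedly inserting new siblings; the preliminary reservation of $s_0^*$ with a uniformly large $d$-value $\gamma$ is what allows both left and right insertions at every stage without ever having to revisit the $d$-values of old labels.
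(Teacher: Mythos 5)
Your proof is essentially correct but takes a genuinely different route from the paper's. The paper does everything in a single successor step: it extends a condition $r$ (with $\alpha_r>\beta$) by one level, attaching to a single node $t\in(T_r)_{\alpha_r}\cap T_{t_0}$ an entire set $X$ of new immediate successors ordered as a countable dense order, giving the new indices $d$-value $\alpha_r+2$ and --- crucially --- letting the \emph{old} index of $t$ ride the $\lex$-maximum of $X$, so that nothing new ever appears $\lex$-above a node whose $d$-value is already committed. You instead grow the copy of $\Qbb$ one insertion at a time over $\omega$ stages and harvest it at the limit level via Lemma \ref{countably_closed}; your mechanism for protecting clause (3) is to reserve a node $s_0^*$ whose label has a uniformly large $d$-value $\gamma$ and to confine all insertions to its cone. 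Both mechanisms address the same obstruction, and yours has the mild advantage of making two-sided insertion completely symmetric; the paper's is shorter and avoids the limit step.

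One loose end in your write-up: when you create $s_0^*$ as an ``extra child'' of $s_0$, you do not specify its $\lex$-position relative to the ordinary child of $s_0$ that must continue the old branch $b_{\xi_0}$ (where $b_p(\xi_0)=s_0$). If $s_0^*$ is placed $\lex$-\emph{above} that child, then clause (3) requires $\Delta=\alpha_p+1<d_p(\xi_0)$, and $d_p(\xi_0)$ need only exceed the $\Delta$'s realized in $T_p$ (all $\leq\alpha_p$) and cannot be enlarged in an extension since $d_p\subset d_q$; so the construction can fail. Placing $s_0^*$ (and hence the whole cone containing your $t_i$'s) $\lex$-below the continuation of $b_{\xi_0}$ repairs this, and is also what your own bookkeeping condition (``$\Delta$-distances to $\lex$-right neighbors match those at stage $p_0$'') implicitly demands. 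With that one word added, the argument goes through.
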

\begin{proof}
We will show that for all $q \in Q$ and $t_0 \in T_q$, the set $\{ p \leq q : (\mathbb{Q},< ) \hookrightarrow 
(\{s \in (T_p)_{\alpha_p} : t \leq_{T_p} s \}, \lex ) \}$ is dense blow $q$.
Fix $r \leq q$ with $\alpha_r > \beta$ and $t \in (T_r)_{\alpha_r} \cap T_{t_0}$.
Let $\xi \in \omega_1$ such that $b_r(\xi) = t$. 
Without loss of generality we can assume that $d_r(\xi) < \alpha_r$.
Fix $X \subset \Lambda \setminus T_r$ an infinite countable set and $u \in X$.
Let  $p < r $ be the condition such that the following hold.
\begin{itemize}
\item $\alpha_p = \alpha_r + 1$, and $\dom(b_p)= \dom(r)\cup E$ where $E$ consists of the first $\omega$
ordinals after $\sup (\dom(r))$.
\item    $T_r \subset T_p$, $(T_r)_{\alpha_r} = (T_p)_{\alpha_r}$ and
for all $s \in (T_r)_{\alpha_r} \setminus \{t \} $ there is a unique 
$s' \in (T_p)_{\alpha_p}$ with $s' > s$.
\item $(T_p)_{\alpha_p}$ consists of the set of all $s'$ as above union with $X$. 
Moreover, for every $x \in X$, $t$ is below $x$, in the tree order.
\item Define $\lex$ on $X$ so that $X$ becomes a 
countable dense linear order without smallest element and with $\max (X) = u$.
\item For every $s \in (T_r)_{\alpha_r} \setminus \{ t\}$ and $s'> s$ in $(T_p)_{\alpha_p}$
let $b_p(s') = b_r(s)$. 
Also $b_p(u) = b_r(t)$. 
Extend $b_p$ on $E$ such that $b_p \rest E$ is a bijection from $E$ to $X \setminus \{ u\}$.
\item The function $d_p$ agrees with $d_r$ on $\dom(b_r)$ and $d_p \rest E$ is constantly $\alpha_p + 1$.
\end{itemize} 
It is easy to see that $p \in Q$ is an extension of $r$ and 
the set $X \setminus \{ u \}$ is a copy of the rationals whose elements are above $t_0$.
\end{proof}

 There is a well known $\sigma$-closed poset which is closely related  to 
our  poset $Q$ and  which generates a Kurepa tree.
Todorcevic showed that the generic Kurepa tree of that poset does not have Aronszajn subtrees. 
The  proof of the following lemma uses the same idea but we include the proof for more clarity.

\begin{lem}\label{all_branches}
Every uncountable downward closed subset of $T$ contains $b_\xi $ for some $\xi \in \omega_1$.
In particular, $\{ b_\xi : \xi \in \omega_1 \}$ is the set of all branches of $T$.
\end{lem}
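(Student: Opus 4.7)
The plan is a density argument in the spirit of Todorcevic's proof that a generic Kurepa tree has no Aronszajn subtree. Fix $p \in Q$ and a $Q$-name $\dot{U}$ with $p \Vdash$ ``$\dot{U}$ is an uncountable, downward-closed subset of $T$''; I will argue that the set of $q \le p$ admitting some $\xi \in \dom(b_q)$ with $q \Vdash b_\xi \subseteq \dot{U}$ is dense below $p$.

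Fix a countable $M \prec H_\theta$ with $p, \dot{U}, Q \in M$, set $\delta = M \cap \omega_1$, and enumerate the $M$-dense open subsets of $Q$ as $\{D_n : n < \omega\}$. First I would build $p = p_0 \ge p_1 \ge \cdots$ with $p_n \in M \cap D_n$, together with a tree-chain $t_n$ in the top level of $p_n$, such that $p_n \Vdash t_n \in \dot{U}$, $t_n <_T t_{n+1}$, and $p_n \Vdash \dot{U} \cap T^{\ge t_n}$ is uncountable. The uncountability invariant is propagated by a partition-pigeonhole: $T^{>t_n}$ decomposes as a disjoint union of the countably many cones above the extensions of $t_n$ at height $\alpha_{p_{n+1}}$, so one such cone has uncountable $\dot{U}$-intersection, and by elementarity such an extension can be decided inside $M$. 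Applying Lemma \ref{countably_closed} with $c_0 = \{t_n\}_{n<\omega}$ then yields a lower bound $q_\omega$ whose top level at height $\delta$ contains a node $t_\omega = b_{q_\omega}(\eta)$ above the chain, with $\eta \in E$ fresh (i.e., outside $\bigcup_n \dom(b_{p_n})$).

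The target is then $q_\omega \Vdash b_\eta \subseteq \dot{U}$, which contradicts the assumption (set up for the density failure) that some $p' \le p$ forces $b_\xi \not\subseteq \dot{U}$ for every $\xi$. For $\alpha < \delta$ this is immediate: since $T$ does not branch at limit heights, $b_\eta \rest \delta$ is the unique chain of predecessors of $t_\omega$ and contains each $t_n \in \dot{U}$, so downward closure suffices. The main obstacle is the limit step $q_\omega \Vdash t_\omega \in \dot{U}$ together with its continuation to heights above $\delta$. For the level-$\delta$ step, I would argue by contradiction: if some $r \le q_\omega$ forced $t_\omega \notin \dot{U}$, then downward closure would force $\dot{U} \cap T^{\ge t_\omega} = \emptyset$, while the invariant that each $\dot{U} \cap T^{\ge t_n}$ is uncountable (combined with the observation that an element at height $\ge \delta$ above all $t_n$ must extend $t_\omega$ by non-branching at limits) would push the uncountable $\dot{U}$-mass onto other level-$\delta$ cones; by countable pigeonhole some such cone $T^{\ge b_{q_\omega}(\xi)}$ with $\xi \in D$ then contains uncountably many $\dot{U}$-elements, and the whole construction can be re-indexed to $\xi$ to produce an extension of $r$ directly forcing $b_\xi \subseteq \dot{U}$, which already yields the density claim. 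An analogous index-switching argument handles levels above $\delta$; the hardest technical point is the careful bookkeeping of the active index across these pigeonhole-driven switches at limit stages.
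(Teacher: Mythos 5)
Your fusion sequence and the chain $\Seq{t_n : n \in \omega}$ with the invariant ``$\dot U\cap T^{\ge t_n}$ is uncountable'' are sound, but the argument breaks exactly where you flag the difficulty, and the proposed repair does not close the gap. First, at level $\delta$: the invariant does not give $q_\omega\Vdash t_\omega\in\dot U$, because the uncountably many elements of $\dot U$ above $t_n$ need not lie above $t_{n+1}$ --- they may all branch off strictly below $t_{n+1}$ --- so no contradiction arises from a condition $r\le q_\omega$ forcing $t_\omega\notin\dot U$, and your fallback (pigeonhole onto another level-$\delta$ cone and ``re-index'') merely restarts the same problem for the new branch at its own limit levels. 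Second, and more fundamentally, $q_\omega$ has height $\delta$ while $b_\eta$ is a cofinal branch of $T$: its nodes above $\delta$ are created by later conditions, and nothing in the countable condition $q_\omega$ can force all $\omega_1$ of those future nodes into $\dot U$. So although the density statement ``some $q\le p$ forces $b_\xi\subseteq\dot U$ for a decided $\xi$'' is in fact equivalent to the lemma, it cannot be obtained by building a single condition level by level as you describe; the ``index-switching bookkeeping'' you defer is precisely the part that does not terminate.

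The paper sidesteps this by arguing by contradiction from the universal hypothesis. Suppose $p$ forces that $\dot A$ contains none of the $b_\xi$. Take a countable $M\prec H_\theta$ with $p,\dot A\in M$ and $\delta=M\cap\omega_1$. For each $\xi<\delta$, the set of conditions forcing some node of $b_\xi$ out of $\dot A$ is dense below $p$ and belongs to $M$, so an $(M,Q)$-generic sequence meets it, and by downward closure the entire cone above that node is forced out of $\dot A$. Lemma \ref{countably_closed} with $m=0$ then supplies a lower bound $q$ with $\alpha_q=\delta$ whose top level consists \emph{exactly} of the tops of the branches $b_\xi$ with $\xi<\delta$ --- no extra nodes. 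Hence $q$ forces $\dot A\cap T_\delta=\emptyset$, contradicting that $\dot A$ is uncountable and downward closed. The essential idea your proposal misses is that negating the conclusion gives control over \emph{all} branches simultaneously at a single level $\delta$, which is something a countable condition can actually enforce, whereas chasing one designated branch through $\omega_1$ levels is not.
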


\begin{proof}
Let $\dot{A}$ be a $Q$-name for an uncountable downward closed subset of $T$ and 
$p \in Q$ forces that $\dot{A}$ contains non of the $b_\xi$'s.
Let $M \prec H_\theta$ be  countable where $\theta$ is a regular large enough regular cardinal such that 
$\dot{A} , p$ are in $M$.
By Lemma \ref{countably_closed} for $m = 0$, there is an $(M,Q)$-generic condition 
$q \leq p$ such that $\alpha_q  = \delta $ where $\delta = M \cap \omega_1$
and for each $t \in (T_q)_\delta$ there is $\xi \in M$ such that $b_q(\xi) =t$.
But then $q$ forces that $\dot{A}$ has no element of height $\delta$ 
which is a contradiction.
\end{proof}
The proof of the following lemma is very similar to the one above.
\begin{lem}
$\Omega(T)$ is stationary.
\end{lem}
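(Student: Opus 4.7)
The plan is to prove stationarity via the standard $(M,Q)$-generic construction, with Lemma \ref{countably_closed} (in the case $m=0$) providing the essential ingredient. After the usual reduction, it suffices to show that for any $p \in Q$ and any $Q$-name $\dot F$ for a function $F \colon [H_\theta]^{<\omega} \to H_\theta$, there is $q \leq p$ forcing that some countable $M' \prec H_\theta^{V[\dot G]}$ closed under $F$ lies in $\Omega(T)$.

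Working in the ground model, I would fix a countable $M_0 \prec H_\theta$ with $p, Q, \dot F, \Lambda \in M_0$ and set $\delta := M_0 \cap \omega_1$. Using $\sigma$-closure, I would build a decreasing $(M_0,Q)$-generic sequence $\Seq{q_n : n \in \omega}$ in $M_0$ below $p$, arranging (by a density argument inside $M_0$) that $D := \bigcup_n \dom(b_{q_n})$ exhausts $M_0 \cap \omega_1 = \delta$. Then apply Lemma \ref{countably_closed} with $m = 0$ to obtain a lower bound $q$ with $\alpha_q = \delta$ and the key property that $(T_q)_\delta = \{b_q(\xi) : \xi \in D\}$, where each $b_q(\xi)$ sits directly above the cofinal chain through $\{b_{q_n}(\xi) : n \in \omega\}$ in $\bigcup_n T_{q_n}$. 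Since $q$ extends each $q_n$, it remains $(M_0,Q)$-generic.

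It then remains to verify that $q$ forces $M_0[\dot G]$ to lie in $\Omega(T)$ and be closed under $F$. For $G$ generic with $q \in G$, the $(M_0,Q)$-genericity of $q$ yields $M_0[G] \prec H_\theta^{V[G]}$, and $\sigma$-closure gives $M_0[G] \cap \omega_1 = \delta$. Closure of $M_0[G]$ under $F = \dot F^G$ is immediate from $\dot F \in M_0$ and elementarity. For capture, let $b \in \mathcal{B}(T)$ be arbitrary in $V[G]$; its node $b(\delta)$ lies in $(T_q)_\delta$ since $q \in G$, so $b(\delta) = b_q(\xi)$ for some $\xi \in D \subseteq M_0$. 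Since $\xi \in M_0$, the branch $b_\xi$ (as the evaluation of its canonical name) lies in $M_0[G]$, and $b_\xi(\delta) = b_q(\xi) = b(\delta)$, so $\Delta(b, b_\xi) \geq \delta$, which means $M_0[G]$ captures $b$ by the criterion stated just before Theorem \ref{Omega}.

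The main technical point, and the reason Lemma \ref{countably_closed} was formulated in its strengthened form, is exactly that every node at the limit level $\delta$ of the lower bound must lie above one of the branches named inside $M_0$. Once we arrange $D = M_0 \cap \omega_1$, this guarantees that each level-$\delta$ node of $T$ is matched by a branch in $M_0[G]$; the remaining ingredients (elementarity of $M_0[G]$ and the computation of $M_0[G] \cap \omega_1$) are standard consequences of $\sigma$-closure.
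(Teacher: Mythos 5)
Your proof is correct and follows exactly the route the paper intends: the paper gives no separate argument for this lemma, saying only that it is "very similar" to the proof of Lemma \ref{all_branches}, which is precisely your construction of an $(M,Q)$-generic lower bound via Lemma \ref{countably_closed} with $m=0$ so that $\alpha_q=\delta$ and every node of $(T_q)_\delta$ is $b_q(\xi)$ for some $\xi\in M\cap\omega_1$, whence every cofinal branch is captured via some $b_\xi\in M[G]$. The details you add (exhausting $\delta$ by a density argument, $M[G]\cap\omega_1=\delta$ from $\sigma$-closure, and the capture criterion from the Fact preceding Theorem \ref{Omega}) are exactly the right ones.
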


We note that if $\CH$ holds then the forcing $Q$ satisfies the $\aleph_2$ chain condition. 
On the other hand, 
if $\kappa > \omega_1$ and 
we consider $\kappa$ many branches for $T$ in the definition of $Q$, then $Q$ collapses $\kappa$
to $\omega_1$.
This is because $Q$ adds a countable to one function from $\kappa$
to $\omega_1$.

\section{Making $B$ Minimal in $\mathcal{C}$}
In this section  we  introduce the forcings which make $B$ a minimal element of $\mathcal{C}$.
The idea is as follows. If  $L \subset B$ is  nowhere dense, we make $L$ $\sigma$-well ordered.
For somewhere dense suborders of $B$ 
we introduce a forcing which adds embedding from $B$ to them and which keeps $B$ 
inside  $\mathcal{C}$.
\begin{defn}
Assume $L \subset B$ is nowhere dense. 
Define $S_L$ to be the poset consisting of all increasing continuous 
sequences $\langle \alpha_i : i \in \beta + 1 \rangle$ in 
$ {\omega_1}^{< \omega_1}$ such that
for all $i \in \beta +1$ and $t \in T_{\alpha_i} \cap( \bigcup L)$
there is $\xi < \alpha_i$ with $t \in b_\xi$. 
We let  $q \leq p$, if $p$  is an initial segment of $q$.
\end{defn}

It is easy to see that for every nowhere dense $L \subset B$, $S_L$ is $\Omega(T)$-complete. 
Therefore, as long as $\Omega(T)$ is stationary, $S_L$ preserves $\omega_1$. 
Moreover, $S_L$ shoots a club in $\Omega (L)$.
So  $S_L$ forces that $L$ is $\sigma$-well ordered. 
This uses  Lemmas \ref{scattered_wellordered}, \ref{Omega}, and the fact that
$L$ has no copy of $\omega_1^*$.

\begin{defn}
Assume $U = T_x$ for some $x \in T$ and $L \subset \mathcal{B}(U)$ is dense in $\mathcal{B}(U)$. Define $E_L$ to be 
the poset consisting of all conditions $q=(f_q, \phi_q)$ such that:
\begin{enumerate}
\item $f_q : T \rest A_q \longrightarrow U\rest A_q$ is a  $\lex$-preserving  tree 
embedding where $A_q$ is a countable and closed subset of $\omega_1$ with $\max(A_q)=\alpha_q$,
\item $\phi_q$ is a countable   partial injection from $\omega_1$ into 
$\{ \xi \in \omega_1 :  b_\xi \in L \}$ such that the map $b_\xi \mapsto b_{\phi_q(\xi)}$ is $\lex$-preserving, 
\item for all $t \in T_{\alpha_q}$ there are at most finitely many $\xi \in \dom(\phi_q) \cup \range(\phi_q)$
with $t \in b_\xi$, 
\item  $f_q , \phi_q$ are consistent, i.e. for all $\xi \in \dom(\phi_q)$, $f_q(b_\xi (\alpha_q)) \in b_{\phi_q(\xi)}$, 
\item for all $\xi \in \dom(\phi_p)$, $d(\xi) \leq d(\phi_p(\xi))$
\end{enumerate} 
We let $q \leq p$ if $A_p$ is an initial segment of $A_q$, $f_p \subset f_q$, and $\phi_p \subset \phi_q$.
\end{defn}

\begin{lem} \label{density2}
For all $\beta \in \omega_1$ the set of all conditions $q \in E_L$ with $\alpha_q > \beta$ is dense in $E_L$.
\end{lem}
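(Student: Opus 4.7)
The plan is to fix $q = (f_q, \phi_q) \in E_L$ and $\beta \in \omega_1$ (without loss of generality $\beta > \alpha_q$), keep $\phi_{q'} = \phi_q$ unchanged, choose a single new height $\gamma > \beta$, set $A_{q'} = A_q \cup \{\gamma\}$, and extend $f_q$ only on the new top level $T_\gamma$. Note that $A_{q'}$ is automatically a countable closed subset of $\omega_1$, since $\gamma$ is isolated from below in $A_{q'}$ (all of $A_q$ lies below $\alpha_q < \gamma$).

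The choice of $\gamma$ must achieve three things simultaneously. First, $\gamma > \beta$. Second, $\gamma$ strictly exceeds $\Delta(b_\xi, b_\eta)$ and $\Delta(b_{\phi_q(\xi)}, b_{\phi_q(\eta)})$ for every pair of distinct $\xi, \eta$ in $\dom(\phi_q)$; this is a countable supremum of countable ordinals, so is achievable. Third, for every $t \in T_{\alpha_q}$, the set $T_\gamma \cap T_{f_q(t)}$ contains a $\lex$-copy of $\Qbb$. To arrange the third point, I enumerate $\{f_q(t) : t \in T_{\alpha_q}\}$ as $\{s_n : n \in \omega\}$ and apply Lemma \ref{rationals_copy} iteratively to obtain $\alpha_0 < \alpha_1 < \cdots$ above $\beta$ with $T_{\alpha_n} \cap T_{s_n}$ containing a $\Qbb$-copy. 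Since $T$ is ever-branching, every node has descendants at every higher countable level; choosing any descendant of each element of an existing $\Qbb$-copy produces a $\Qbb$-copy at the higher level, because the lex order between the chosen descendants is determined by the first level of disagreement of their ancestors, which sits below the earlier level. Hence any $\gamma$ that exceeds $\sup_n \alpha_n$ and the $\Delta$-bound from the second point inherits a $\Qbb$-copy in every cone $T_\gamma \cap T_{f_q(t)}$.

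I then define $f_{q'}$ on $T_\gamma$ cone by cone. For each $t \in T_{\alpha_q}$, Condition (3) for $q$ makes the set $D_t = \{\xi \in \dom(\phi_q) : b_\xi(\alpha_q) = t\}$ finite, and Condition (4) forces $f_{q'}(b_\xi(\gamma)) = b_{\phi_q(\xi)}(\gamma)$ for $\xi \in D_t$. The second bullet on $\gamma$ guarantees that distinct $\xi, \eta \in D_t$ give distinct source nodes $b_\xi(\gamma) \neq b_\eta(\gamma)$ in $T_\gamma \cap T_t$ and distinct target nodes in $T_\gamma \cap T_{f_q(t)}$, and that the node-level $\lex$ at height $\gamma$ agrees with the branch-level $\lex$ on both sides; together with $\phi_q$ being $\lex$-preserving on branches, this makes the finite partial map on $D_t$ well-defined, injective, and $\lex$-preserving. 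Using the $\Qbb$-copy in $T_\gamma \cap T_{f_q(t)}$, a routine back-and-forth extends this partial map to a $\lex$-preserving injection of the countable order $T_\gamma \cap T_t$ into $T_\gamma \cap T_{f_q(t)}$. Assembling the cones yields $f_{q'}$; tree-preservation is automatic from the cone structure, cross-cone $\lex$-preservation reduces to $\lex$-preservation of $f_q$ at level $\alpha_q$, and Condition (3) for $q'$ at height $\gamma$ is inherited from Condition (3) for $q$ at $\alpha_q$ because each branch through a height-$\gamma$ node passes through a unique height-$\alpha_q$ node. Conditions (1), (2), (4), (5) follow similarly or are left unchanged.

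The main obstacle is the persistence of $\Qbb$-copies to the possibly-limit ordinal $\gamma$ produced by iterating Lemma \ref{rationals_copy}; this is what the ever-branching hypothesis on $T$ is used for, via the choice of descendants above each element of the earlier $\Qbb$-copy. Once this combinatorial point and the $\Delta$-bound from the second bullet are in hand, the rest is a standard back-and-forth, and by repeatedly applying this density one can extend $q$ to arbitrarily large heights.
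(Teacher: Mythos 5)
Your overall skeleton is the same as the paper's: add a single new level $\gamma>\beta$, leave $\phi_q$ unchanged, and extend $f_q$ cone by cone subject to the finitely many constraints $b_\xi(\gamma)\mapsto b_{\phi_q(\xi)}(\gamma)$ for $\xi\in D_t$. The gap is in the step you dismiss as ``a routine back-and-forth.'' Knowing that $T_\gamma\cap T_{f_q(t)}$ contains \emph{some} copy of $\Qbb$ does not let you extend the constrained finite map: what is needed is that each interval of $(T_\gamma\cap T_{f_q(t)},\lex)$ determined by consecutive constrained image points $b_{\phi_q(\xi)}(\gamma)$ --- including the interval below the smallest and the interval above the largest --- can absorb the corresponding interval of $T_\gamma\cap T_t$, and a $\Qbb$-copy sitting somewhere in the cone need not meet those intervals at all. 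The intervals strictly between, and below, the constrained images can always be arranged, because $T$ branches freely to the left of any branch. The genuine obstruction is the interval \emph{above} the $\lex$-largest constrained image: if $b_{\phi_q(\xi)}$ is the $\lex$-largest branch of $B$ passing through $u=f_q(t)$ and $\gamma\geq d(\phi_q(\xi))$, then $b_{\phi_q(\xi)}(\gamma)$ is the $\lex$-maximum of $T_\gamma\cap T_u$ and nothing can be mapped above it, while $T_\gamma\cap T_t$ may contain elements $\lex$-above $b_\xi(\gamma)$. No choice of $\gamma$ or of $\Qbb$-copies removes this obstruction.

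This is exactly what clause (5) in the definition of $E_L$, namely $d(\xi)\leq d(\phi_q(\xi))$, is for: it guarantees that whenever the target cone is capped in this way, the source cone is capped as well, so that $b_\xi(\gamma)$ is also the $\lex$-maximum on the source side and nothing needs to be sent above $b_{\phi_q(\xi)}(\gamma)$. The paper's proof is organized around precisely this dichotomy (the sets $U_0$ and $U_1$ of nodes $u$ according to whether some branch of $B$ through $u$ lies $\lex$-above every branch of $\range(\phi_p)$ through $u$), builds the rational copies $X_u$ so that they contain the constrained images in the correct positions (conditions b--c, resp.\ e--g), and invokes $d(b)<d(b_m)$ at the key moment in the $U_1$ case. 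Your proposal mentions clause (5) only to note that it is trivially preserved, and never uses it in the construction --- which is the symptom of the missing argument. To repair the proof you must (i) arrange the $\Qbb$-copies so that they interleave correctly with the points $b_{\phi_q(\xi)}(\gamma)$, and (ii) handle the top interval via clause (5) as the paper does.
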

\begin{proof}
Fix $p \in E_L$ and
let $D_p = \dom(\phi_p)$ and $R_p = \range(\phi_p)$.
We sometimes abuse the notation and use $D_p, R_p$ in order to refer to the corresponding set of branches,
$\{b_\xi : \xi \in D_p \}$ and $\{b_\xi : \xi \in R_p \}$.
We consider the following partition of $U=T_{\alpha_p} \cap \range(f_p)$.
Let $U_0$ be the set of all $u \in U$ such that if  $u \in b  \in R_p$ then  there is a $c \in B$
with $u \in c$ and $b \lex c$.
Note that if $u \in U$ and there is no $b \in R_p$ with $u \in b$ then $u \in U_0$.
We let $U_1 = U \setminus U_0$.

First we will show that if $u \in U_0$ then there is $\alpha_u \in \omega_1$ and 
$X_u \subset T_{\alpha_u} \cap T_u$ such that:
\begin{enumerate}

\item[a.]
$\alpha_u >\max( \{\Delta(b,c) : b,c \textrm{ are in } A\} \cup \{ \beta\})$, where  
$A$ is the set of all $b \in R_p$ such that $u \in b$,
\item[b.]  $(X_u, \lex)$ is isomorphic to the rationals, and
\item[c.] $\{b(\alpha_u) :  b \in A\} \subset X_u$.
\end{enumerate}
In order to see this,
let $b_m$ be the maximum of $A$ with respect to $\lex$.
This is possible because $A$ is finite.
Let $c \in B$ such that $u \in c$ and $b_m \lex c$. This is possible because we assumed that $u \in U_0$.
Let $t_m$ be the element in $c \setminus b$ which has the lowest height.
By Lemma \ref{rationals_copy}, there is  a copy of the rationals $X_{t_m}$ in some level $\alpha_{t_m} > \Ht(t_m)$
such that for all $x \in X_{t_m}, t_m <_T x$. 
In other words, $u<_T t_m <_T x$ for all $x \in X_{t_m}$ and 
$X_{t_m}$ is isomorphic to the rationals when it is considered with $\lex$.
Moreover, $b_m(\alpha_{t_m}) \lex x$ for all $x \in X_{t_m}.$

Assume $b' \lex b$ are two consecutive elements of $(A, \lex)$.
Let $t_b \in b \setminus b'$ which has the minimum height.
There is $\alpha_{b'b} > \beta$ such that  
$ (T_{\alpha_{b'b}} \cap T_{t_b} , \lex)$ has a copy of the rationals $X_{b'b}$.
Moreover, $X_{b'b}$ can be chosen is such a way that 
for all $x \in X_{b'b}$, $b'(\alpha_{b'b}) \lex x \lex b(\alpha_{b'b})$.
This is because there is no restriction for branching to the left in the tree $T$. 
More precisely, for all $\gamma \in \omega_1$ there is a $c \lex b$ in $B$ such that $\Delta (b,c) > \gamma.$
Similarly, if $a$ is the minimum of $A$ with respect to $\lex$, 
there is $\alpha_a > \beta$ and  $X_a \subset T_{\alpha_a} \cap T_u$ which is isomorphic to the rationals.
Moreover $\alpha_a , X_a$ can be chosen in such a way that if
$x \in X_a$ then $u <_T x$ and for all $b \in A$, $x \lex b(\alpha_a)$.

Now let $\alpha$ be above $\alpha_{t_m}$, $\alpha_a$ and all of $\alpha_{b'b}$'s as above.
Then $T_{\alpha_a} \cap T_u$ contains $X_u$
which is a copy of the rationals and $\{b(\alpha_u) :  b \in A\} \subset X_u$.
 
Note that $U_1$ is the set of all  $u \in U$ such that for some $b_u \in R_p$, $u \in b_u$ and if $u \in c \in B$ then 
$c \leq_{\textrm{lex}} b_u$. 
By the same argument as above we can show 
for all $u \in U_1$  there is $\alpha_u \in \omega_1$ and $X_u \subset T_{\alpha_u} \cap T_u$ such that:
\begin{itemize}
\item[d.]
$\alpha_u >\max( \{\Delta(b,c) : b,c \textrm{ are in } A\} \cup \{ \beta\})$, where  
$A$ is the set of all $b \in R_p$ such that $u \in b$,
\item[e.]  $(X_u \setminus \{b_m(\alpha) \}, \lex)$ is isomorphic to the rationals, where $b_m$ is the maximum of $A$
with respect to $\lex$,
\item[f.] $\{b(\alpha_u) :  b \in A\} \subset X_u$, and 
\item[g.] $\max(X_u , \lex) = b_m(\alpha).$
\end{itemize}

Now we are ready to introduce the extension $q \leq p$. Let $\alpha \in \omega_1$ and $\alpha > \alpha_u$
for all $u \in U$. Let $A_q = A_p \cup \{ \alpha \}$, $\phi_q = \phi_p$.
If $u \in U_0$ then $T_\alpha \cap T_u$ contains $X_u$ such that conditions b,c hold.
If $u \in U_0$ and $f_p(t) =u$, let $f_q \rest (T_\alpha \cap T_t)$ be any $\lex$ preserving function 
which is consistent with $\phi_q$.
If $u \in U_1$ then $T_\alpha \cap T_u$ contains $X_u$ such that conditions e,f,g hold.
In addition, if $u \in U_1, f_q(t) =u,b_m = \max(A)$ and $b$ is mapped to $b_m$ by $\phi_q$,
then $d(b) < d(b_m)$. So $b(\alpha) = \max((T_\alpha \cap T_t), \lex)$.
This means that if $u \in U_1$ and $f_p(t) =u$, we can find $f_q \rest (T_\alpha \cap T_t)$ which is $\lex$ preserving and 
which is consistent with $\phi_q$.
\end{proof}
The  proof of the following  lemma uses Lemma \ref{countabletoone} and the same argument as above.
\begin{lem}\label{density_3}
For all $\xi \in \omega_1$, the set of all $q \in E_L$ with $\xi \in \dom(\phi_q)$ is dense in $E_L$.
\end{lem}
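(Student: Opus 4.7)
The plan is to adapt the construction from the proof of Lemma \ref{density2}, invoking Lemma \ref{countabletoone} to select an image branch in $L$ with an adequate $d$-value. Fix $p \in E_L$ and $\xi \in \omega_1$; if $\xi \in \dom(\phi_p)$ there is nothing to do, so assume $\xi \notin \dom(\phi_p)$.

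First, apply Lemma \ref{density2} to extend $p$ to $p_1 \leq p$ with
$$\alpha_{p_1} > \max\bigl(\{d(\xi)\} \cup \{d(\xi') : \xi' \in \dom(\phi_p)\}\bigr),$$
a countable ordinal because $\dom(\phi_p)$ is countable. The definition of $d$ then guarantees $\Delta(b_\xi, b_{\xi'}) < \alpha_{p_1}$ for every $\xi' \in \dom(\phi_p)$. Set $t_0 := b_\xi(\alpha_{p_1})$ and $u_0 := f_{p_1}(t_0) \in U$. Because $T$ is ever-branching, the cone above $u_0$ yields a non-trivial $\lex$-interval $I_{u_0} := \{c \in \mathcal{B}(U) : u_0 \in c\}$ of $(\mathcal{B}(U), \lex)$, and density of $L$ in $\mathcal{B}(U)$ makes $L \cap I_{u_0}$ uncountable. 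By Lemma \ref{countabletoone} the set $\{\eta : d(\eta) < d(\xi)\}$ is countable, so one may pick $\eta_0$ with $b_{\eta_0} \in L \cap I_{u_0}$, $d(\eta_0) \geq d(\xi)$, and $\eta_0 \notin \dom(\phi_{p_1}) \cup \range(\phi_{p_1})$. Set $q := (f_{p_1}, \phi_{p_1} \cup \{(\xi, \eta_0)\})$ with $A_q := A_{p_1}$.

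Verification that $q \in E_L$ is routine except for the $\lex$-preservation of $\phi_q$. Consistency, $f_q(b_\xi(\alpha_q)) = u_0 \in b_{\eta_0}$, holds by construction; the $d$-bound is built into the choice of $\eta_0$; and the finiteness clause holds at the new level because injectivity of $f_{p_1}$ forces $u_0 \neq b_{\eta'}(\alpha_{p_1})$ for any $\eta' \in \range(\phi_{p_1})$. For $\lex$-preservation, fix $\xi' \in \dom(\phi_p)$ and let $\eta' := \phi_p(\xi')$. Since $\alpha_{p_1} > \Delta(b_\xi, b_{\xi'})$, the tree-lex of $b_\xi(\alpha_{p_1})$ and $b_{\xi'}(\alpha_{p_1})$ coincides with the branch-lex of $b_\xi, b_{\xi'}$; the $\lex$-preserving tree embedding $f_{p_1}$ carries this to $u_0$ versus $u' := f_{p_1}(b_{\xi'}(\alpha_{p_1}))$; and since $b_{\eta_0}, b_{\eta'}$ pass through $u_0, u'$ at level $\alpha_{p_1}$, their branch-lex is determined by the tree-lex of $u_0, u'$.

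The only essentially new ingredient compared with Lemma \ref{density2} is the simultaneous realization of all constraints on $\eta_0$: membership in $L$, passage through $u_0$, the $d$-bound, and freshness. Lemma \ref{countabletoone} is precisely what enables this, by collapsing the $d$-bound requirement to a countable exclusion that is compatible with the uncountable supply of branches in $L \cap I_{u_0}$ delivered by the density of $L$.
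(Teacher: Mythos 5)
Your construction follows exactly the route the paper indicates for this lemma (the paper gives no written proof, only the remark that one should use Lemma \ref{countabletoone} together with the argument of Lemma \ref{density2}), and the verification of conditions (2)--(5) for $q$ is correct: raising $\alpha_{p_1}$ above $d(\xi)$ and all $d(\xi')$ for $\xi'\in\dom(\phi_p)$ does force $\Delta(b_\xi,b_{\xi'})<\alpha_{p_1}$ in both lexicographic configurations, and from there $\lex$-preservation, consistency, and the $d$-inequality all go through as you say.

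The one step you should not wave at is the assertion that ``density of $L$ in $\mathcal{B}(U)$ makes $L\cap I_{u_0}$ uncountable.'' Density of a suborder does not in general force uncountable intersection with a nontrivial interval ($\Qbb$ is dense in $\Rbb$), so as stated this is a non sequitur; you need it badly, since the exclusions coming from Lemma \ref{countabletoone} and from $\dom(\phi_{p_1})\cup\range(\phi_{p_1})$ only rule out countably many candidates. The claim is true here, but for tree-specific reasons that should be recorded: (i) by genericity of $Q$ (the density argument in Lemma \ref{rationals_copy} adds new indices $\xi$ with $b_\xi$ passing above any prescribed node), uncountably many branches pass through $u_0$, so $I_{u_0}$ is uncountable; and (ii) no uncountable family of cofinal branches of an $\omega_1$-tree admits a countable dense subset --- if $D$ were countable and dense in $I_{u_0}$, pick $\gamma$ above all splitting levels $\Delta(c,c')$ for $c,c'\in D$ and, by counting at level $\gamma$, two branches $b\lex b'$ of $I_{u_0}$ agreeing past $\gamma$; any two elements of $D$ lying between them would have to split above $\gamma$, a contradiction, while density demands at least two such elements. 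With (i) and (ii), $L\cap I_{u_0}$ is indeed uncountable and your choice of $\eta_0$ is available. Modulo supplying this justification (and replacing the $\max$ over the countable set $\dom(\phi_p)$ by a $\sup$), the proof is correct.
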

The following lemma shows that $\omega_1$ is preserved by countable support iteration 
of the forcings of the form $E_L$, where $L$ is a somewhere dense subset of $B$.
\begin{lem}
The forcing $E_L$ is $\Omega(T)$-complete.
\end{lem}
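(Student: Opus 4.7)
The plan is to imitate the lower-bound construction from Lemma \ref{countably_closed}, but now carrying both $f_q$ and $\phi_q$. Let $M \prec H_\theta$ be countable with $T$, $L$, the labelling $\langle b_\xi : \xi \in \omega_1 \rangle$, and $d$ all in $M$; assume $M \cap \mathcal{B}(T) \in \Omega(T)$, and set $\delta = M \cap \omega_1$. Given a descending $(M, E_L)$-generic sequence $\langle p_n : n \in \omega \rangle$ with each $p_n \in M$, I will build a lower bound $q = (f_q, \phi_q)$ with $\alpha_q = \delta$.

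First set $A_q = \bigcup_n A_{p_n} \cup \{\delta\}$, $\phi_q = \bigcup_n \phi_{p_n}$, and let $f_q$ agree with $\bigcup_n f_{p_n}$ on levels strictly below $\delta$. By Lemma \ref{density2} and $(M, E_L)$-genericity, $\sup_n \alpha_{p_n} = \delta$ (each $\alpha_{p_n}$ is a countable ordinal in $M$, hence below $\delta$, while the dense sets $\{r : \alpha_r > \beta\}$ for $\beta \in M$ force the supremum to be $\delta$), so $A_q$ is closed with maximum $\delta$. Lemma \ref{density_3} together with $\dom(\phi_{p_n}), \range(\phi_{p_n}) \subseteq M \cap \omega_1 = \delta$ yields $\dom(\phi_q) = \delta$ and $\range(\phi_q) \subseteq \delta$. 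Finally, for each $t \in T_\delta$, use $M \in \Omega(T)$ to locate some (as will be shown, unique) $\xi < \delta$ with $b_\xi(\delta) = t$, and declare $f_q(t) := b_{\phi_q(\xi)}(\delta)$.

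The main obstacle is the finiteness condition (3) of the definition of $E_L$ at the new limit level $\delta$, together with well-definedness and injectivity of $f_q \rest T_\delta$. All three reduce to the following uniqueness fact, which is the decisive use of $M \in \Omega(T)$: for distinct $\eta_1, \eta_2 \in \delta$ one has $b_{\eta_1}(\delta) \ne b_{\eta_2}(\delta)$. Indeed, the split point $\Delta(b_{\eta_1}, b_{\eta_2})$ is a countable ordinal definable from $\eta_1, \eta_2$ and $T$ in $M$, so it lies in $M \cap \omega_1 = \delta$ and is therefore strictly less than $\delta$. But $b_{\eta_1}(\delta) = b_{\eta_2}(\delta)$ would force these two cofinal branches to share their $\delta$-th element and, by the no-branching-at-limits assumption on $T$, to agree throughout $\delta + 1$, making $\Delta(b_{\eta_1}, b_{\eta_2}) > \delta$---a contradiction. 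Consequently, at most one $\xi \in \delta$ satisfies $b_\xi(\delta) = t$ for each given $t \in T_\delta$, so at most one member of $\dom(\phi_q) \cup \range(\phi_q)$ witnesses $t \in b_\xi$; this settles (3), and the same logic applied inside $\delta$ makes $f_q \rest T_\delta$ well-defined and injective.

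The remaining verifications go through by routine union arguments. $\phi_q$ is a countable $\lex$-preserving partial injection into $\{\xi : b_\xi \in L\}$ because the $\phi_{p_n}$ form a compatible chain of such maps; the consistency $f_q(b_\xi(\delta)) = b_{\phi_q(\xi)}(\delta) \in b_{\phi_q(\xi)}$ is immediate from the definition; and condition (5) is inherited from the $p_n$. To see that $f_q$ is a $\lex$- and tree-order-preserving, level-preserving embedding at level $\delta$, note that for $\xi \in \dom(\phi_q)$ the identity $f_{p_n}(b_\xi(\beta)) = b_{\phi_q(\xi)}(\beta)$ holds on $\beta \in A_{p_n}$ (for $n$ large enough that $\xi \in \dom(\phi_{p_n})$), so the three orders at level $\delta$ are inherited from---and agree with---the corresponding orders on the image branches via $\phi_q$. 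Therefore $q \leq p_n$ for every $n$, establishing $\Omega(T)$-completeness of $E_L$.
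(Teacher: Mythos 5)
Your proof is correct and takes essentially the same route as the paper's: form the unions, use Lemmas \ref{density2} and \ref{density_3} to see that the heights and $\dom(\phi)$ fill up $\delta$, and extend $f$ to level $\delta$ by sending $b_\xi(\delta)$ to $b_{\phi(\xi)}(\delta)$. One small remark: the uniqueness of $\xi<\delta$ with $b_\xi(\delta)=t$ follows from elementarity alone (as your own argument shows), whereas the place where $M\in\Omega(T)$ is genuinely needed is the \emph{existence} of such a $\xi$ --- capturing a branch through $t$ by some $b_\xi\in M$ with $\Delta\geq\delta$ --- which you invoke but do not spell out.
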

\begin{proof}
Assume $x \in T$ and $L$ is dense in $\mathcal{B}(T_x).$
Let $\theta > {2^{\omega_1}}^+$ be a regular cardinal, $M \prec H_\theta$
be countable such that $L, T, x$ are all in $M$ and $M \in \Omega(T)$.
Let $\delta = M \cap \omega_1$, and $\Seq{p_n : n \in \omega}$ be  a decreasing $(M,E_L)$-generic sequence.
We use $A_n , \alpha_n,  f_n, \phi_n$ in order to refer to $A_{p_n}, \alpha_{p_n}, f_{p_n}, \phi_{p_n}$.

We define a lower bound $p= (f_p, \phi_p)$ for $\Seq{p_n : n \in \omega}$ as follows.
Let $A = \bigcup\limits_{n \in \omega} A_n,  A_p = A \cup \delta$.
By Lemma \ref{density2}, $\sup \{ \alpha_n : n \in \omega \} = \delta$, and $A_p$ is closed.
We define $\phi_p = \bigcup\limits_{n \in \omega} \phi_n$.
Note that by elementarity and  Lemma \ref{density_3}, $\bigcup \{ \dom(\phi_n) : n \in \omega \} = \delta$.
So for each $t \in T_\delta$ there is a unique $\xi \in \dom(\phi_p)$ such that $t \in b_\xi$.
Define  $f: T_\delta \longrightarrow T_\delta \cap T_x$ by $f(t)= b_{\phi_p(\xi)}(\delta)$  where 
$\xi $ is the unique $\xi \in \delta$ with $t \in b_\xi$.
Since $\phi_p$ preserves the lexicoraphic order, $f$ does too.
Let $f_p = f \cup \bigcup\limits_{n \in \omega} f_n$.

Since $f_n$ is consistent with $\phi_n$ for each $n$, $f_p$ is a tree embedding.
Obviously $\phi_p$ preserves the lexicographic order. 
Moreover, by elementarity, for each $t \in T_\delta$ there is a unique $\xi \in \delta$ with $t \in b_\xi$.
Since $\phi_p$ is one to one, for each $t \in T_\delta$ there is at most one $\xi \in \range(\phi_p)$ with $t \in b_\xi$.
The rest of the conditions for $p \in E_L$ are obvious.
\end{proof}

Now we are ready to introduce the forcing extension in which $\mathcal{C}$ has a minimal element.
Let's fix some notation.
$P=P_{\omega_2}$ is a countable support iteration 
$\langle P_i , \dot{Q_j} : i \leq \omega_2 , j < \omega_2 \rangle $ over a model of $\CH$  such that 
$Q_0 = Q$, and for all $0<j< \omega_2$, $\dot{Q}_j$ is a  $P_j$-name for either $E_L$ or $S_L$
depending on whether or not $L$ is somewhere dense. 
As usual, the bookkeeping is such that 
if $L \subset B$ is in $\textbf{V}^P$, either $E_L$ or $S_L$ has appeared in some step of the iteration.
This is possible because all of the iterands have size $\aleph_1$ so $P$ satisfies the $\aleph_2$ chain condition.
This can be seen by the work in \cite{second} too.
$T,B$ are the generic objects that are introduced by $Q$, as in the previous section.

Since $E_L, S_L$ are $\Omega(T)$-complete forcings, by Lemma \ref{branchcs}, 
the countable support iteration consisting of the posets $E_L, S_L$
do not add new branches to $T$.
It is worth pointing out that
although $P$ is a $\sigma$-closed forcing, the posets 
$E_L, S_L$ that are involved in the iteration are not even proper. 

\begin{lem} \label{punch}
Assume $G \subset P$ is $\textbf{V}$-generic. Then $\Gamma(B)$ is stationary in $\textbf{V}[G]$. 
\end{lem}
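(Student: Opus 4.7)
The plan is to fix a $P$-name $\dot{C}$ forced by some $p_0 \in P$ to be a club in $[\hat{B}]^\omega$, and to produce $q \le p_0$ together with a countable $Z$ such that $q \Vdash Z \in \dot{C} \cap \Gamma(B)$. First I would choose a countable $M \prec H_\theta$ containing $\dot{C}$, $p_0$, $P$, and the $Q$-names for $T$ and $B$, arrange $\delta := M \cap \omega_1$ to be an ordinal, and take $Z := M[\dot{G}] \cap \hat{B}$. Because $P$ adds no new cofinal branches to $T$ (Lemma \ref{branchcs}) and no new countable sequences of ordinals (as $P$ is $\sigma$-closed), $\mathcal{B}(T)^{\mathbf{V}[G]} = \{b_\xi : \xi < \omega_1\}$ and $M[\dot{G}] \cap B = \{b_\xi : \xi < \delta\}$. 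The Fact on capture in trees then yields that $M[\dot{G}] \cap \hat{B}$ captures a branch $b$ iff $\Delta(b, b_\eta) \ge \delta$ for some $\eta < \delta$, equivalently $b(\delta) = b_\eta(\delta)$. So the task reduces to producing $q$ forcing the existence of a node of $T_\delta$ not of the form $b_\eta(\delta)$ for any $\eta < \delta$.

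To do this I would invoke the $\sigma$-closedness of $P$ and build a descending $(M, P)$-generic sequence $\langle p_n : n < \omega \rangle$ inside $M \cap P$ below $p_0$. Elementarity together with the density lemmas of Section~3 lets the sequence be arranged so that $\bigcup_n \dom(b_{p_n(0)}) = \delta$, $\sup_n \alpha_{p_n(0)} = \delta$, and, using Lemma \ref{rationals_copy}, so that $\bigcup_n T_{p_n(0)}$ contains copies of the rationals above every fixed node at cofinally many levels below $\delta$. Under $\CH$ in $\mathbf{V}$, this partial tree then has $2^{\aleph_0} = \aleph_1$ cofinal branches, whereas only $\delta$ of them are ``old'' --- namely those of the form $\{t : \exists n,\, t \le_T b_{p_n(0)}(\xi)\}$ for $\xi < \delta$. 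Choose a cofinal branch $c_0$ outside this family and apply Lemma \ref{countably_closed} at the $Q$-coordinate with $m = 1$ and this $c_0$, producing a $Q$-lower bound $q(0)$ whose top level is $\{t_\xi : \xi < \delta\} \cup \{s_0\}$ with $s_0 = b_{q(0)}(\eta_0)$ for a fresh index $\eta_0 > \delta$. The global $\sigma$-closedness of $P$ then lets me extend to $q \le p_0$ that is $(M, P)$-generic and restricts to $q(0)$ at stage $0$.

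Since $q$ is $(M, P)$-generic, $q \Vdash M[\dot{G}] \cap \hat{B} \in \dot{C}$; and since the construction of $q(0)$ forces $b_\xi(\delta) = t_\xi \ne s_0 = b_{\eta_0}(\delta)$ for every $\xi < \delta$, $q$ also forces that $M[\dot{G}] \cap \hat{B}$ fails to capture $b_{\eta_0}$, placing $Z$ in $\Gamma(B)$. I expect the main technical obstacle to be reconciling $(M, P)$-genericity with the specific stage-$0$ lower bound containing $s_0$: since the iterands $E_L$ and $S_L$ are not proper, the existence of tail lower bounds compatible both with the chosen $q(0)$ and with every dense set of $P$ in $M$ must rest on the (stated) global $\sigma$-closedness of $P$, and one has to verify carefully that the genericity-based density arguments producing the extra cofinal branch $c_0$ are enforceable through the sequence $\langle p_n \rangle$.
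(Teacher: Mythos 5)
Your setup (reflect to a countable $M$, reduce capturing to the existence of a node of $T_\delta$ not of the form $b_\eta(\delta)$ for $\eta<\delta$, and cap an extra ``new'' cofinal branch of $R=\bigcup_n T_{p_n(0)}$ at the $Q$-coordinate) matches the paper's skeleton. But the step you defer to the end --- ``reconciling $(M,P)$-genericity with the specific stage-$0$ lower bound containing $s_0$'' --- is not a routine verification; it is the entire content of the lemma, and an \emph{arbitrary} new branch $c_0$ will not work. Two things go wrong. First, at a coordinate $\beta$ carrying $S_{\dot L}$, a lower bound of $\langle p_n(\beta)\rangle$ must append $\delta$ to the sequence, which requires that every $t\in T_\delta\cap\bigcup\dot L$ lie on some $b_\xi$ with $\xi<\delta$; since $s_0=b_{\eta_0}(\delta)$ with $\eta_0\geq\delta$, the branch $c_0$ must eventually avoid the (decided) trace $\bigcup\dot L\cap R$ for \emph{every} nowhere dense $\dot L$ named in $M$ at every stage in $M\cap\omega_2$. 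Second, at a coordinate carrying $E_{\dot L}$, the lower bound must define the embedding $f$ on all of $(T_q)_\delta$, in particular on $s_0$, and $f(s_0)$ must be a node of $(T_q)_\delta$ capping the downward closure of $f[c_0]$; so the $Q$-coordinate lower bound must cap not only $c_0$ but also the downward closures of $g[c_0]$ for all finite compositions $g$ of the decided partial embeddings $\dot f\restriction R$ (and their inverses, which is also what forces $c_0$ to avoid the preimages $g^{-1}[\bigcup\dot L\cap R]$ for the $S_{\dot L}$ requirement). Appealing to the $\sigma$-closedness of $P$ cannot supply this: closedness gives \emph{some} lower bound of a generic sequence, but the easy lower bounds have $\dom(b_q)=\delta$ and force $M[G]\notin\Gamma(T)$, which is exactly the trap the paper warns about.

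The paper resolves this by first collecting, in $\mathbf{V}$, the family $\mathcal U$ of decided traces of nowhere dense sets, the family $\mathcal F$ of finite compositions of the decided embeddings and their inverses, and $\mathcal W=\{g[U]:g\in\mathcal F,\ U\in\mathcal U\}$, then building the chain $\langle t_m\rangle$ determining $c_0$ by diagonalizing against every $W\in\mathcal W$ and against every old branch $b_{q_n}(\xi_k)$, and finally choosing the $Q$-lower bound via Lemma \ref{countably_closed} so that precisely the old branches and the branches $\overline{g[c_0]}$, $g\in\mathcal F$, receive top nodes (with the $d_q$-value of the new nodes set to $\delta+1$ so that condition (5) of the definition of $E_L$ can be met). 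Only then can one induct on $\beta\in M\cap\omega_2$ and extend each coordinate. Your proposal is missing this diagonalization entirely, so as written it does not go through.
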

\begin{proof}
Assume $M$ is  a suitable model for $P$ in $\textbf{V}$ 
with $M\cap \omega_1 = \delta$
and $\Seq{p_n : n \in \omega} $ is a descending $(M, P)$-generic 
sequence. 
Let $q_n = p_n \rest 1$ and $R= \bigcup_{n \in \omega} T_{q_n}$. 
Note that if $G \subset P$ is a generic filter over 
$\textbf{V}$ which contains $\Seq{p_n : n \in \omega} $, then  
in $\textbf{V}[G]$ we have
$T_{< \delta} = R$.
We will find an $(M,P)$-generic condition $p$ below $\Seq{p_n : n \in \omega}$
which forces  $M[G]   \in \Gamma(T)$,  in $\textbf{V}[G]$.

Before we work on the details we explain the idea how to find such a condition $p$.
Let $q = p \rest 1$.
Since $\Seq{q_n : n \in \omega}$ is $(M,P_1)$-generic, we have to have that $\dom(b_q) \supset \delta$.
If we allow $\dom(b_q) = \delta$, the advantage is that it is easy to find lower bounds for the 
rest of the sequences $\Seq{p_n (\beta) : n \in \omega}$, by induction on $\beta$.
But then, the resulting lower bound is going to force that $M[G] \notin \Gamma (T)$.
This means that we need to find a lower bound in such a way that $\dom(q) \supsetneq \delta$.
We use a diagonalization argument in $T$ and a Skolem 
closure argument to find such a lower bound for $\Seq{q_n: n \in \omega}$. 
Then we will use induction on $\beta$ to find a lower bound for each $\Seq{p_n (\beta) : n \in \omega}$ with 
$\beta \in M \cap \omega_2$.

Now we return to the proof.
Note that for all $\beta \in M \cap \omega_2$, and $\dot{L}$ a $P_\beta $-name for a nowhere dense subset of $B$
in $M$, $\Seq{p_n : n \in \omega} $ decides $(\bigcup \dot{L}) \cap \dot{T_{< \delta}}$.
More precisely,
there exists $U \subset R$ in $\textbf{V}$ such that
if $G$ is a a $P$-generic filter over $\textbf{V}$ with $\{p_n : n \in \omega \} \subset G$
then $[(\bigcup \dot{L})]_G \cap [\dot{ T_{< \delta}}]_G = U$.
This is because  $[(\bigcup \dot{L})]_G \cap [\dot{ T_{< \delta}}]_G$
 is a countable subset of $R$ and $R \in \textbf{V}$.
Here we use the fact that
countable support iteration of $\Omega(T)$-complete forcings do not add new reals.
Let $\mathcal{U}$ be the set of all countable $U \subset R$ such that for some $\beta \in M \cap \omega_2$ and
 $\dot{L} \in M$
which is a $P_\beta$-name for a  nowhere dense subset of $B$ 
$\Seq{p_n : n \in \omega}$ decides $(\bigcup \dot{L}) \cap \dot{T_{< \delta}}$ to be $U$.

Assume $\beta \in M\cap \omega_2 $ and $\dot{L}$ is a $P_\beta$-name for a somewhere dense 
subset of $B$ and $\dot{Q}_\beta$ is a $P_\beta$-name for the forcing $E_{\dot{L}}$. 
Let's denote the canonical name for
the generic filter of $E_{\dot{L}}$ by $(\dot{f},\dot{ \phi})$. 
Similar to the case of nowhere dense subsets of $B$,
$\Seq{p_n : n \in \omega}$ decides $(\bigcup \dot{L}) \cap \dot{T_{< \delta}}$, $\dot{f} \rest R$ and 
$\dot{\phi} \rest \delta$. 
Moreover, $\dot{f}_G \rest R$ is in $\textbf{V}$, for any $\textbf{V}$-generic filter $G$.

Now, let $\mathcal{F}$ be the set of all finite compositions $g_0 \circ g_1 \circ ... \circ g_n$, such that for all $i \leq n$,
$g_i$ or ${g_i}^{-1}$ is a partial function on $R$ which is of the form $\dot{f}_G \rest R$
where for some $\dot{\phi},$ $(\dot{f}, \dot{\phi})$ is the canonical name for the generic filter added by $E_{\dot{L}}$,
$\dot{L} \in M$ is a name for a somewhere dense subset of $B$, 
and $G$ is a $P$-generic filter over $\textbf{V}$ which contains $\{p_n : n \in \omega \}$.
Also let $\mathcal{W}$ be the collection of all $g[U]$ such that $U \in \mathcal{U}$ and $g \in \mathcal{F}$.
Note that every $W \in \mathcal{W}$ is nowhere dense in $R$. 

Fix $\Seq{W_n : n \in \omega}$ an enumeration 
of $\mathcal{W}$ and $\Seq{\xi_n : n \in \omega}$ an enumeration of $\delta$. Let $\Seq{t_m : m\in \omega}$
be a chain in $R$ such that
\begin{itemize}
\item if $m=2k$ then $W_k$ has no element above $t_m$, and 
\item if $m= 2k+1$ then $t_m$ is not in the downward closure of $\{ b_{q_n}(\xi_k) : n \in \omega \}$.
\end{itemize}

By Lemma \ref{countably_closed}, $\Seq{q_n : n \in \omega}$ has a lower bound $q$
such that  whenever $c$ is a cofinal branch of $R$, then
there is an element on top of $c$ if and only if  one of the following holds.
\begin{itemize}
\item For some $\xi \in \delta$, $\{ b_{q_n}(\xi) : n \in \omega \}$ is cofinal in $c$.
\item For some $g \in \mathcal{F}$,  $\{g(t_m) : m\in \omega\}$ 
is cofinal in $c$.
\end{itemize}
We can choose $q$ in such a way that
if $t$ is on top of $c$, $c$ satisfies the second condition, and $\eta \in \omega_1$ with $b_q(\eta) = t$
then $d_q(\eta) = \delta +1$.
For the rest of the proof, assuming that $p \rest \beta$ is given, we find $p(\beta)$.
If $\beta \notin M \cap \omega_2$ 
we define $p(\beta)$ to be the trivial condition of the corresponding forcing $Q_\beta$.
For each $\beta \in M$, since $p \rest \beta$ is $(M,P_\beta)$-generic, it decides $p_n(\beta)$ for all $n \in \omega$.

Assume  $\dot{Q}_\beta$ is a $P_\beta$-name for some $S_{\dot{L}} \in M$.
We define  $p(\beta) = \{(\delta, \delta) \} \cup \bigcup\limits_{n \in \omega} p_n(\beta)$.
Observe that if $\dot{L}$ is a $P_\beta$-name for a nowhere dense subset of $B$,
 $t \in (T_q)_\delta$, $c_t$ is the set of all elements of 
$T_q$ that are less than  $t$, and $(b_q)^{-1}(t) \geq \delta$
then there is $s \in c_t$ such that $p \rest \beta$ forces that $\bigcup \dot{L}$ has no element above $s$.
This makes $p(\beta)$ a lower bound for $\Seq{p_n(\beta): n \in \omega}$.

Assume  $\dot{Q}_\beta = E_{\dot{L}}$ is a $P_\beta$-name where $\dot{L} \in M$.
By Lemmas \ref{density2}  $\sup(\bigcup\limits_{n \in \omega}A_{p_n(\beta)}) = \delta$.
Moreover, Lemma \ref{density_3} implies that for all $\xi \in \delta$
there is $n \in \omega$ such that $\xi \in \dom(\phi_{p_n(\beta)}).$
We define $p(\beta) = (f,\phi)$ as follows.
Let $\phi = \bigcup_{n \in \omega} \phi_{p_n(\beta)}$, 
$A_{p(\beta)} = \{\delta \}  \cup \bigcup\limits_{n \in \omega}A_{p_n(\beta)} $, $\dom (f) = T_q \rest  A_{p(\beta)} $.
If $\Ht(s) \in A_{p_n(\beta)}$ for some $n \in \omega$, let $f(s)= f_{p_n(\beta)}(s)$. 
If $\Ht(s) = \delta$ and $c$ is a cofinal branch in $R$ whose elements are below $s$, 
let  $f(s)$ be the element on top of the chain $\{ f(v) : v \in c \cap \dom(f) \}$. 
This makes sense, because there is an element on top of $\{ f(v) : v \in c \cap \dom(f) \}$ in $T_q$.
In order to see this, first assume that for some $\xi \in \delta$, $\{ b_{q_n}(\xi) : n \in \omega \}$ is cofinal in $c$.
Let $n \in \omega$ such that $\xi \in \dom(\phi_{p_n(\beta)})$, and $\eta =\phi_{p_n(\beta)}(\xi).$
Then $b_q(\eta)$ is the top element of $\{ f(v) : v \in c \cap \dom(f) \}$.
If for some $g \in \mathcal{F}$,  $\{g(t_m) : m\in \omega \}$ 
is cofinal in $ c$, then  $\{f \circ g(t_m) : m\in \omega \}$ 
 is cofinal in the downward closure of $\{ f(v) : v \in c \cap \dom(f) \}$.
 So $\{ f(v) : v \in c \cap \dom(f) \}$ has a top element as desired.
This finishes defining $f,\phi$. 
It is obvious that $p(\beta)$ is a lower bound for $\Seq{p_n (\beta) : n \in \omega}$.

Now we  show  that if $G \subset P$ is $\textbf{V}$-generic with $p \in G$, then $M[G] \in \Gamma(T)$.
Let $c \subset R$ be the downward closure of $\{ t_m : m \in \omega \}$.
Let $t \in T_\delta$ be the element on top of $c$.
Let $b \in \mathcal{B}(T)$ with $t \in b$.
It is obvious that $c$ is different from the downward closure of $b_q(\xi)$ for each $\xi \in \delta$.
So for all $\xi \in \delta$, $\Delta(b , b_\xi) < \delta$.
It is obvious that $M[G] \cap \omega_1 = \delta$. 
By Lemma \ref{all_branches} and the fact that $\Omega (T)$-complete forcings do not add branches to $T$, 
$\{ b_\xi : \xi \in \omega_1 \}$ is the set of all branches of $T$ in $\textbf{V}[G]$.
Therefore for all $b' \in \mathcal{B}(T) \cap M[G]$, $\Delta(b',b) < \delta$.
This means that $M[G]$ does not capture $b$ in $\textbf{V}[G]$.
\end{proof}

Now we are ready to show that $B$ is a minimal element of $\mathcal{C}$.
Lemma \ref{punch} implies that $B$ is not $\sigma$-scattered and hence it is not $\sigma$-well ordered.
It is obvious that $B$ does not contain any real type.
Since $\Omega(T)$ is stationary in $\textbf{V}[G]$, $B$ does not contain any Aronszajn type either.
Recall that $\omega_1^*$ does not embed into $B$ in $\textbf{V}[G \cap Q]$ and 
$\Omega (T)$-complete forcings do not add new branches to $\omega_1$-trees.
This means that $B$ does not have any copy of $\omega_1^*$ in $\textbf{V}[G]$.
Therefore, every uncountable subset of $B$ in $\textbf{V}[G]$ contains a copy of $\omega_1$.
If $L \subset B$ and $L \in \mathcal{C}$, then $L$ has to be somewhere dense. 
But then the forcing $E_L$ has added  an embedding from $B$ to $L$.
Hence $B$ is minimal in $\mathcal{C}$.

\def\Dbar{\leavevmode\lower.6ex\hbox to 0pt{\hskip-.23ex \accent"16\hss}D}

\end{document}